\documentclass{article}
\usepackage{graphicx} % Required for inserting images

\usepackage{comment}
\usepackage{float}
\usepackage{amsmath}
\usepackage{listings}
\usepackage{amsfonts}
\usepackage{pgf,tikz,pgfplots}
\pgfplotsset{compat=1.15}
\usepackage{mathrsfs}
\usepackage{comment}
\usetikzlibrary{arrows}
\usepackage[normalem]{ulem}
\usepackage{amsmath,amssymb,amsthm}
\usepackage{subcaption} 
\usepackage{hyperref}

\usepackage{cite}
\usepackage{authblk}

\newtheorem{theorem}{Theorem}
\newtheorem{proposition}{Proposition}
{\theoremstyle{remark} }

\title{Solutions of the Riemann problem for the Self-Organized
  Hydrodynamics model }

\author[1]{Christophe Berthon}
\author[1]{Marie Compain}
\author[2]{Carlos Par\'es}
\author[3]{Ernesto Pimentel-Garc{\'\i}a}

\affil[1]{Laboratoire de Math\'ematiques Jean Leray, CNRS UMR 6629, Nantes Universit\'e, 2 rue de la Houssini\`ere, BP 92208, 44322 Nantes, France}
\affil[2]{Departamento de An\'alisis Matem\'atico, Estad{\'\i}stica e Investigaci\'on Operativa, y Matem\'atica Aplicada, Universidad de M\'alaga, Campus de Teatinos s/n, M\'alaga 29080, Spain}
\affil[3]{Departamento de Matem\'atica Aplicada, Universidad de M\'alaga, Campus de Teatinos s/n, M\'alaga 29080, Spain}

\date{\today}

\begin{document}

\maketitle

\begin{abstract}
The present paper is devoted to exhibit solutions of the Riemann
problem for the Self-Organized Hydrodynamics model. The $2\times2$
system under consideration is strictly hyperbolic. With ambiguous non-conservative products, we adopt
a non usual Generalized Rankine-Hugoniot jump condition to suitably
define the shock wave solutions. This jump condition is obtained from the information given by the travelling wave solutions of the viscous extension of the system that involves the diffusion terms arising from the physics of the problem. We are then able to determine the
simple waves of the non-conservative system studied here and we are
able to construct some solutions of the Riemann problem of interest.
\end{abstract}

%%%%%%%%%%%%%%%%%%%%%%%%%%%%%%%%%%%%%%%%%%%%%%%%%%%%%%%%%%%
%%%%%%%%%%%%%%%%%%%%%%%%%%%%%%%%%%%%%%%%%%%%%%%%%%%%%%%%%%%
%%%%%%%%%%%%%%%%%%%%%%%%%%%%%%%%%%%%%%%%%%%%%%%%%%%%%%%%%%%

\section{Introduction}
In the present work, we are concerned by an analysis of the Riemann
problem for a PDE system which models a self-organized
hydodynamics. This system has been derived in
\cite{Degond2012,Degond2008} as the hydrodynamic limit of the
well-known Cousin-Vicsek model decribed in \cite{vicsek1995novel}. The
resulting set of PDE models the behavior of a collective motions of
self-propelled particules subject to alignment interactions. These
models have a wide range of applications. For example, they have been
used in biology to study the collective behavior of cells, including
cell migration, tissue formation and the propagation of biochemical
signals \cite{Creppy2016}. These models have also been adopted in
crowd dynamics, where they have inspired the simulation of human
pedestrian flows capable of self-organising to navigate complex
spaces, manage congestion, and optimise evacuation paths
\cite{Degond2013}. These models have also been applied to opinion
dynamics, where they allow for the continuum modeling of an intention
field subject to weak social interactions and consensus processes. In
the field of social self-organisation, these models are used to
capture the macroscopic evolution of collective choices and human
interactions from microscopic alignment rules \cite{Degond2017a}.
These models can be applied in robotics, where they have inspired the
design of autonomous robot systems capable of self-organising to
accomplish collective tasks, such as exploring the environment or
forming complex structures \cite{Degond2014}. In the field of
nanotechnology, these models are used to design nano-robot systems
capable of moving and organising themselves or to applied to 3D
autonomous drone fleets \cite{Degond2025,Degond2017b}.  They are also
found in ecology, where they can be used to study the collective
behavior of living organisms in aquatic ecosystems, such as schools of
fish or colonies of plankton \cite{Degond2017}.

It is worth noticing that these Self-Organized Hydrodynamics models
(SOH model) are essential when considering complex hydrodynamic
phenomena emerging from the interactions between a large number of
particles following simple rules. The model under consideration in
the present paper is proposed in \cite{Degond2008} (see also
\cite{Degond2012,motsch2011numerical,Frouvelle2012}) and, adopting a
one dimentional flow, it reads for all $x\in{\mathbb R}$ and $t>0$
\begin{equation}\label{sys0}
\left\{
\begin{array}{l}
\displaystyle \partial_t \rho + \partial_x (\rho u) = 0,\\
\displaystyle  \rho(\partial_t u + c u \partial_x u) + \lambda (1 -u^2) \partial_x \rho = 0, \\
\displaystyle  \rho(\partial_t v + c u \partial_x v) - \lambda uv \partial_x \rho = 0,
\end{array}
\right.
\end{equation}
where $\rho(x,t)>0$ stands for the density and the pair
$(u(x,t),v(x,t))$ is the velocity vector. Here, $c\in(0,1)$ and
$\lambda>0$ define constants to be prescribed by the physics.

A specificity of the Cousin-Vicsek model stays in the velocity
definition since the self-propulsion speed is assumed to be constant
and identical for all the particles. As a consequence, we impose
\begin{equation}\label{constraint}
u^2 + v^2 = 1,
\end{equation}
which makes one of the velocity equation redundant in \eqref{sys0}. It
is thus clear that, in the model adopted here, the velocity is only
governed by its direction. After
\cite{motsch2011numerical,compain:hal-05490904}, considering
$u=\cos(\theta)$ and $v=\sin(\theta)$ with $\theta\in (-\pi,\pi]$ the
  direction of the flow, the system made of \eqref{sys0} and
  \eqref{constraint} is, at least for smooth solutions, easily shown to
  be equivalent to
\begin{equation}\label{pde}
\left\{
\begin{array}{l}
\displaystyle \partial_t \rho + \partial_x (\rho \cos(\theta)) = 0,\\
\displaystyle \partial_t \theta + \partial_x (c \sin(\theta)) - \lambda \frac{\sin(\theta)}{\rho} \partial_x \rho = 0. 
\end{array}
\right.
\end{equation}

First of all, we emphasize that the above system is in
non-conservation form. This nonlinear system turns out to be hyperbolic (see
\cite{motsch2011numerical}). Hence, discontinuities may arise in a
finite time so that the system \eqref{pde} may not admit smooth
solutions. Research on non-conservative hyperbolic systems has produced
important results during the last three decades.  Following the
pioneering definition introduced by Dal Maso, LeFloch, and Murat
\cite{DMLFM1995}, a complete theory, called path-theory, has emerged that encompasses weak
solution formulations
\cite{volpert1967,lefloch1989,DMLFM1995,lefloch2004}, existence
results for Riemann \cite{lefloch1989,DMLFM1995,LeFloch2007,Thanh2009} and Cauchy problems
\cite{lefloch1989,crasta2002}, uniqueness in the space of bounded
variation functions \cite{baiti2001}, derivation and convergence of
finite volume approximations
\cite{castro2008,pares2006}. Alongside these theoretical foundations,
a wide range of continuum mechanics models have been systematically
studied due to their importance in multiphase flows and turbulence
modeling
\cite{audebert2006,berthon2007,Berthon2008,chalons2006,chalons2007} or
multilayer shallow equation
\cite{pimentelgarcia2022cell,Castro-Diaz2011,Fernandez-Nieto2011,Castro2012,CastroDiaz2013,Audusse2005}.

Despite the fundamental interest of the path-theory devolopped by Dal
Maso, LeFloch, and Murat \cite{DMLFM1995}, this approach needs a
choice in order to close the definition of the non-conservative
products within a shock discontinuity wave. In general, this choice is
made to simplify the computations associated to the path theory. In
some particular frameworks, it is possible to relevantly substitute
the path-theory choice by physical arguments. After the work by
Sainsaulieu \cite{Sainsaulieu1991}, adopting suitable viscosity is a
good ingredient to define shock waves (for instance, see
\cite{Raviart1995,berthon2007,chalons2006,Berthon2002,Berthon2008}). Here,
considering system \eqref{sys0}-\eqref{constraint}, a suitable
viscosity has been derived in \cite{Degond2012} to get the following
viscous system:
\begin{equation}
\left\{
\begin{array}{l}
\displaystyle \partial_t \rho + \partial_x (\rho u) = 0,\\
\displaystyle  \rho(\partial_t u + c u \partial_x u) + \lambda (1 -u^2) \partial_x \rho = 
c_v\left((1 - u^2) \partial_{xx} (\rho u) - uv \partial_{xx}(\rho v) \right), \\
\displaystyle  \rho(\partial_t v + c u \partial_x v) - \lambda uv \partial_x \rho = 
c_v\left(-uv\partial_{xx} (\rho u) + (1- v^2) \partial_{xx}(\rho v) \right), \\
u^2 + v^2 = 1,
\end{array}
\right.
\end{equation}
where $c_v$ is a given viscous coefficient prescribed by the
physics. The above system reformulates as follows as soon as the
velocity direction is only considered:
\begin{equation}\label{sys_visquous}
\left\{
\begin{array}{l}
\displaystyle \partial_t \rho + \partial_x (\rho \cos(\theta)) = 0, \\
\displaystyle \partial_t \theta + \partial_x (c \sin(\theta)) - \lambda \frac{\sin(\theta)}{\rho} \partial_x \rho =
c_v \left(\frac{2}{\rho} \partial_x \theta \partial_x \rho + \partial_{xx} \theta \right).
\end{array}
\right.
\end{equation}
Arguing this viscous extension of the initial system and according to
\cite{Sainsaulieu1991}, in \cite{compain:hal-05490904} the authors
involve travelling wave solutions to get a non usual Rankine-Hugoniot
relation. Moreover, they show that the system \eqref{pde} completed
by the non standard Rankine-Hugoniot relation, called Generalized
Rankine-Hugoniot condition and detailed later, is equivalent to \eqref{sys0}-\eqref{constraint}
for weak solutions eventually containing discontinuous shock waves.

In \cite{motsch2011numerical} it has been shown that System \eqref{pde}
can be written in conservation form as follows:
\begin{equation}\label{scl}
\displaystyle \partial_t \left( \begin{array}{c} \rho \\ f_1(\theta) \end{array} \right)
+  \partial_x \left( \begin{array}{c} \rho \cos(\theta)) \\
 c f_2(\theta) - \lambda \log(\rho) \end{array} \right) = 0 ,
\end{equation}
with
$$
f_1(\theta) = \log\left| \tan \left(\frac{\theta}{2} \right)\right|, \quad 
f_2(\theta) = \log |\sin (\theta)|.$$
Systems \eqref{pde} and \eqref{scl} are equivalent for smooth solutions but, as it will be seen, the standard Rankine-Hugoniot condition for \eqref{scl} 
\begin{equation}\label{RH_con}
\left\{
\begin{array}{l}
\sigma(\rho_R - \rho_L) = \rho_R \cos(\theta_R) - \rho_L \cos(\theta_L),\\
\smallskip
\displaystyle\sigma(f_1(\theta_R) - f_1(\theta_L)) = cf_2(\theta_R) - cf_2(\theta_L)
+ \lambda \log(\rho_R)) - \lambda \log(\rho_L),
\end{array}
\right.
\end{equation}
is not equivalent to the Generalized Rankine-Hugoniot condition found in \cite{compain:hal-05490904}. Therefore, the concept of weak solution is different.
Although using a conservative formulation makes it easier to solve the problem numerically, here we opt for the non-conservative formulation \eqref{pde} accompanied by the Generalized Rankine-Hugoniot condition in \cite{compain:hal-05490904} as it is the formal limit of \eqref{sys_visquous} as $c_v \to 0$. Since \eqref{sys_visquous} involves the expression of the viscous term coming from the physics of the problem, the shock waves satisfying the Generalized Rankine-Hugoniot condition are expected to capture better the microscale diffusive effects.

In \cite{motsch2011numerical} (see also
\cite{compain:hal-05490904}) the details of the algebra of the system
\eqref{pde} are given. As a consequence, once the jump conditions has been chosen, all the needed ingredients are
now known to exhibit the solutions of the Riemann problem associated
to \eqref{pde}. For instance, the reader is referred to
\cite{LeFloch2007,Thanh2009} where Riemann solutions are derived for
non-conservative hyperbolic systems.  To address such an objective, the
paper is organized as follows. In the next section, we recall the
algebra of the system and we show that \eqref{pde} is strictly
hyperbolic and we present the nature of the fields (see
\cite{motsch2011numerical}). Next, in Section 3, we detail the simple
waves; namely both rarefaction and shock waves. From now on, we
underline that the Generalized Rankine-Hugoniot conditions established
in \cite{compain:hal-05490904} are here adopted. Equipped with the
simple waves, we are able to derive the solutions of the Riemann
problem in Section 4. Let us note that the flow angles $\theta$ are
imposed to belong to a particular domain within $(-\pi,\pi]$ because
of a possible change in the nature of the fields.

%%%%%%%%%%%%%%%%%%%%%%%%%%%%%%%%%%%%%%%%%%%%%%%%%%%%%%%%%%%
%%%%%%%%%%%%%%%%%%%%%%%%%%%%%%%%%%%%%%%%%%%%%%%%%%%%%%%%%%%
%%%%%%%%%%%%%%%%%%%%%%%%%%%%%%%%%%%%%%%%%%%%%%%%%%%%%%%%%%%

\section{Main nature of the SOH model}
For the sake of completeness, we now present the main results
concerning the algebra of the evolution law \eqref{pde}. The
computations presented in this section were performed in
\cite{motsch2011numerical} and they are essential in the sequel of
this paper. In order to clarify in the notations, we set
\begin{equation}\label{U}
U = \left[ \begin{array}{c} \rho \\ \theta \end{array} \right] \in [0,\infty)\times(-\pi, \pi],
\end{equation}
to write the system \eqref{pde} in a quasi-linear system as follows:
$$
U_t + A(U)U_x = 0$$
where we have introduced
\begin{equation}\label{A}
A(U) = \left[ \begin{array}{cc} \cos \theta & -\rho \sin \theta \\ \displaystyle- \lambda \frac{\sin \theta}{\rho} & c \cos \theta
\end{array} \right].
\end{equation}

The above matrix admits two real eigenvalues given by
\begin{equation}\label{av1}
\mu_\pm(\theta) = \frac{c + 1}{2} \cos \theta \pm \gamma, \quad \gamma = \sqrt{\frac{(c-1)^2}{4} \cos^2 \theta + \lambda \sin^2 \theta}.
\end{equation}
Since $c<1$, the eigenvalues are distinct and then $A(U)$ is
diagonalizable in ${\mathbb R}$. As a consequence, the system
\eqref{pde} is strictly hyperbolic in $(0, \infty) \times (-\pi, \pi]$. Moreover, let us note that the
associated eigenvectors read
$$
R_- (\theta) = \left[ \begin{array}{c} \displaystyle c \cos \theta - \mu_-(\theta)\\  \frac{\lambda}{\rho} \sin
    \theta\end{array}
  \right] 
\quad \text{ and } \quad 
R_+ (\theta) = \left[ \begin{array}{c} \rho \sin
    \theta\\ \displaystyle \cos \theta - \mu_+(\theta) \end{array}
  \right].
$$

The characteristic field $(\mu_+,R_+)$ is neither genuinely nonlinear
nor linearly degenerated since $\nabla_U\mu_+\cdot R_+=0$ as soon as
$\theta=\pm\theta^\star$ with
$$
\theta^* = \arctan\left(\sqrt{\frac{4\lambda^2 - (2\lambda + c)(c-1)^2 }{\lambda (c +1)^2}}\right).
$$
Let us remark that the physical values of $c$ and $\lambda$ enforce
$4\lambda^2 - (2\lambda + c)(c-1)^2>0$ so that
$\theta^\star\in(-\pi,\pi]$ exists. In addition, we notice that
  $\mu_+(\theta)$ reaches its maximum for $\pm\theta^\star$ and its
minimum for $\theta=\pi$. It has also a local minimum at $\theta = 0$.

Likewise, the characteristic field $(\mu_-,R_-)$ is neither genuinely
nonlinear nor linearly degenerated because $\nabla_U\mu_-\cdot R_-$
vanishes for $\theta=\pm(\theta^\star-\pi)$. Moreover, we notice that
the minimum value of $\mu_-(\theta)$ is reached for
$\theta=\pm(\theta^\star-\pi)$ while its maximum value is obtained
for $\theta=0$. It has also a local maximum in $\theta = \pi$.

For the sake of simplicity in the notations, we set
$$
\overline{\mu}_-= \mu_-(\pi-\theta^*) = \min(\mu_-(\theta)), \quad \overline{\mu}_+= \mu_+(\theta^*) = \max(\mu_+(\theta)).
$$

To illustrate these behaviors, in Figure \ref{fig:eigenvalues} we display the graphs of the
eigenvalues and the values of $\theta$ corresponding to the change of
nature of the characteristic fields; formally the values of $\theta$
where the genuinely nonlinear character is lost.

\begin{figure}[htpb]
\begin{center}
\includegraphics[width=0.6\linewidth]{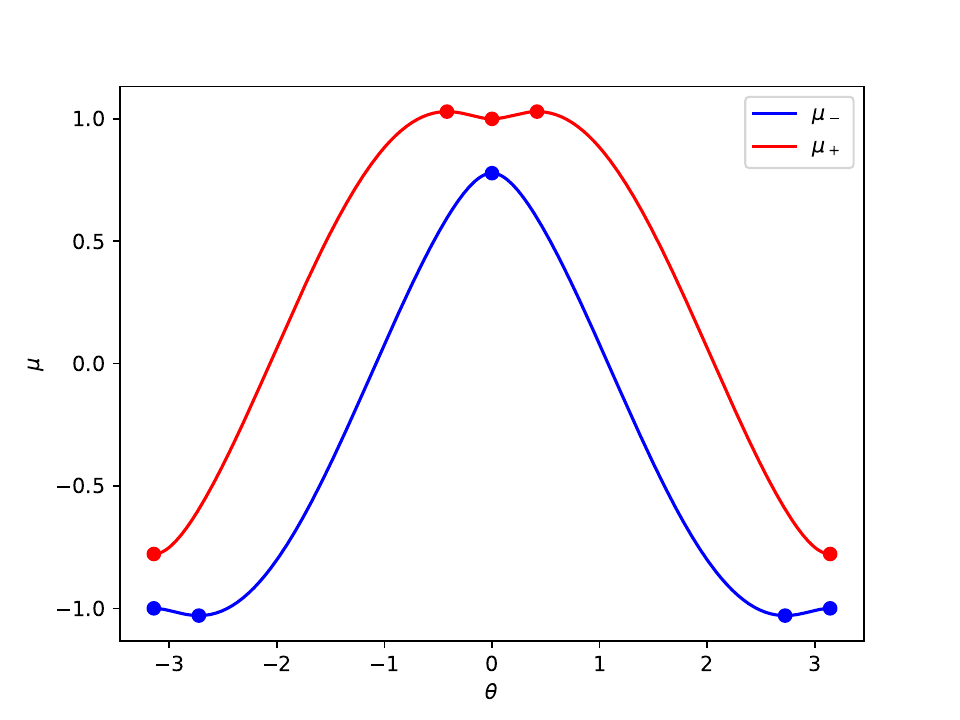}
\caption{Graphs of the eigenvalues $\mu_\pm(\theta)$. The angles $\pm\pi$, $0$, $\theta^* - \pi$, $ \pi-\theta^*$ (resp. $\pm\pi$, $0$, $\pm\theta^*$) in which the first (resp. second) characteristic field is not genuinely nonlinear,  are marked with a green circle (resp. a red star) in the corresponding graph.}
\label{fig:eigenvalues}
\end{center}
\end{figure}

Because of the complexity of the involved algebra calculus, in the
present work we decide to avoid eventual change in the nature of the
characteristic fields. As a consequence, we focus here in the analysis
of system \eqref{pde} in the region
\begin{equation}\label{defOmega} 
\Omega = (0, \infty) \times (\theta^*, \pi - \theta^*),
\end{equation}
where
$\nabla_U\mu_\pm\cdot R_\pm\not=0$. With some abuse in the definition,
we then enforce our study in regions where the characteristic fields
are genuinely nonlinear. However, it can be easily checked that, if $U(x,t) =
[\rho(x,t), \theta(x,t)]^T$ is a solution of \eqref{pde} such that
$U(x,t) \in \Omega$ for all $x$, $t$, then $\tilde U(x,t) =
[\rho(-x,t), -\theta(-x,t)]^T$ is a solution satisfying $\widetilde
U(x,t) \in \widetilde \Omega: = (0, \infty) \times (\theta^*-\pi,
-\theta^*) $ so that all the conclusions of our analysis also applies to
$\widetilde \Omega$.

Since it will be usefull in the sequel, this section is concluded by
the definition of the inverse function of $\mu_\pm$. We then define the functions
\begin{equation}\label{invav}
\chi_\pm(x) = \arccos\left(\frac{x(c+1) \pm \sqrt{x^2(c-1)^2 + 4\lambda(c + \lambda - x^2)}}{2(c+ \lambda)}\right).
\end{equation}
Hence, it can be checked that
\begin{itemize}
 \item $\chi_+:[\mu_-(\pi-\theta^*), \mu_-(0)] \to \mathbb{R}$ is the inverse function of $\mu_-$ in the interval $[0, \pi - \theta^*]$;
\item $\chi_-:[\mu_+(\pi), \mu_+(\theta^*)] \to \mathbb{R}$ is the inverse function of $\mu_+$ in the interval $[\theta^*, \pi]$.
\end{itemize}

%%%%%%%%%%%%%%%%%%%%%%%%%%%%%%%%%%%%%%%%%%%%%%%%%%%%%%%%%%%
%%%%%%%%%%%%%%%%%%%%%%%%%%%%%%%%%%%%%%%%%%%%%%%%%%%%%%%%%%%
%%%%%%%%%%%%%%%%%%%%%%%%%%%%%%%%%%%%%%%%%%%%%%%%%%%%%%%%%%%

\section{Simple curves}
Considering strictly hyperbolic systems, the Riemann problem is resolved by
connecting constant states through elementary simple waves. For
genuinely nonlinear characteristic fields, these transitions take the
form of either smooth rarefaction waves or discontinuous shock
waves. Continuous rarefaction waves come from the integral curves of the characteristic fields and
Riemann Invariant whereas shock waves represent sharp jumps
constrained by the Rankine-Hugoniot relations. Because of the
non-conservative form of the system \eqref{pde}, the expected
Rankine-Hugoniot conditions are non standard and they will be given
according to \cite{compain:hal-05490904}. Together, these two
fundamental waves describe the complete dynamic structure of the
solution of the Riemann problem of interest.

From now on, we emphasize that the characteristic fields are enforced to be genuinely
nonlinear so that we restrict the present analysis within the domain
$\Omega$ defined by \eqref{defOmega}.

\subsection{Riemann invariants and rarefaction waves}
After \cite{motsch2011numerical}, the integral curves of the characteristic fields starting from $U_L = [\rho_L, \theta_L]^T$ are given by
$$
\rho = \rho_L\exp\left( \int_{\theta_L}^\theta \frac{\sin s }{\cos s - \mu_\pm(s)}\, ds\right).
$$
Let us set
\begin{equation}\label{charcurv}
F_\pm(\theta) = \int^\theta \frac{\sin s }{\cos s - \mu_\pm(s)}\, ds,
\end{equation}
so that 
$$
\rho = \rho_L\exp\left(F_\pm(\theta)-F_\pm(\theta_L)\right),
$$
and then the associated Riemann invariants read
\begin{equation}\label{Rinv}
I_\pm = \log(\rho) - F_\pm(\theta),
\end{equation}
The primitive function $F_\pm(\theta)$ can be computed with elementary
functions and its expression is shown in Appendix \ref{appendix_1}.

Next, given a state $U_L = [\rho_L, \theta_l]^T \in \Omega$, the curve
of states that can be linked to $U_L$ through a 1-rarefaction curve is
defined as follows:
\begin{equation}\label{R-}
\mathcal{R}_-(U_L) = \left\{ \left[\begin{array}{c} \rho_L\exp(F_-(\theta_R)-F_-(\theta_L)) \\ \theta_R \end{array} \right], \quad \theta_R \in [\theta^*, \theta_L]\right\}.
\end{equation}
Observe that, since $\mu_-(\theta)$ is a decreasing function for $\theta \in [\theta^*, \pi-\theta^*]$, the rarefaction wave can only be built if $\theta_R < \theta_L$.
The analytical expression of the rarefaction wave linking $U_L$ with
$U_R \in \mathcal{R}_-(U_L)$ is obtained by parameterizing the Riemann
invariant as a function of the eigenvalue $\mu_-$. To achieve such
reformulation, it is sufficient to use the adequate inverse of the
eigenvalues according to \eqref{invav} to get
$$
U(x,t) = \begin{cases}
U_L & \text{if $x \leq \mu_-(U_L)t$;}\\
\left[\begin{array}{c} \rho_L\exp(F_-(\chi_+(\frac{x}{t}))-F_-(\theta_L)) \\ \chi_+(\frac{x}{t}) \end{array} \right]
& \text{if $\mu_-(U_L)t < x < \mu_-(U_R)t$;}\\
U_R & \text{if $x \geq \mu_-(U_R)t$.}
\end{cases}
$$

Similarly, the expression of the curves composed by the states that
can be linked to $U_L$ through a 2-rarefaction wave is now defined by
\begin{equation}\label{R+}
\mathcal{R}_+(U_L) = \left\{ \left[\begin{array}{c} \rho_L\exp(F_+(\theta_R)-F_+(\theta_L)) \\ \theta_R \end{array} \right], \quad \theta_R \in [\theta^*, \theta_L]\right\},
\end{equation}
and the analytical expression of the rarefaction wave linking $U_L$ with $U_R \in \mathcal{R}_+(U_L)$ is given then by
$$
U(x,t) = \begin{cases}
U_L & \text{if $x \leq \mu_+(U_L)t$;}\\
\left[\begin{array}{c} \rho_L\exp(F_+(\chi_-(\frac{x}{t}))-F_+(\theta_L)) \\ \chi_-(\frac{x}{t}) \end{array} \right]
& \text{if $\mu_+(U_L)t < x < \mu_+(U_R)t$;}\\
U_R & \text{if $x \geq \mu_+(U_R)t$.}
\end{cases}
$$
These curves $\mathcal{R}_\pm(U_L)$ are illustrated in Figure \ref{fig:Simple_curves}.

\subsection{Jump conditions}
Since the system \eqref{pde} is in non-conservative form, it is well-known that the jump conditions are ambiguous \cite{DMLFM1995}. According to \cite{Sainsaulieu1991}, we adopt here a viscous strategy to suitably define the required Rankine-Hugoniot relations. To address such an issue, the admissible discontinuities are characterized as the limits of the traveling waves of the viscous problem \eqref{sys_visquous} as $c_v$ tends to 0. In \cite{compain:hal-05490904}, the viscous profiles were computed and the following jump condition was derived: a pair of state $U_L = [\rho_L, \theta_L]^T$, $U_R = [\rho_R, \theta_R]^T$ defines a shock wave of speed $\sigma$ compatible with the viscous model \eqref{sys_visquous} as soon as the following conditions hold:
\begin {itemize}
\item[(SW1)] The states $U_L$, $U_R$, and the velocity $\sigma$ satisfy
\begin{equation}\label{RH}
\left\{
\begin{array}{l}
\sigma(\rho_R - \rho_L) = \rho_R \cos(\theta_R) - \rho_L \cos(\theta_L); \\
G_\sigma (\theta_R) - G_\sigma (\theta_L) = 0;
\end{array}
\right.
\end{equation}
where $G_\sigma$ is given for $\cos(\theta) \not= \sigma$ by
\begin{equation} \label{G_sigma}
G_\sigma'(\theta) = \frac{- \sigma + c \cos(\theta) - \frac{\lambda \sin(\theta)^2}{\cos(\theta) - \sigma}}{(\cos(\theta) - \sigma)^2}.
\end{equation}

\item[(SW2)] If $\theta_L < \theta_R$, for all $\theta \in (\theta_L, \theta_R)$, we have $ G_\sigma(\theta) - G_\sigma(\theta_L) >0 $,\\
if $\theta_L > \theta_R$, for all $\theta \in (\theta_R, \theta_L)$, we have $ G_\sigma(\theta) - G_\sigma(\theta_L) < 0$.

\item[(SW3)] For all $\theta$ between $\theta_L$ and $\theta_R$, we have $\cos(\theta) \not= \sigma$.
\end{itemize}

The function $G_\sigma(\theta)$ can be computed with elementary functions and its expression is detailed in Appendix \ref{appendix_2}.

At this level, it is worth noticing that this very nonstandard jump condition is a remarkable counterexample to a possible equivalence of the evanescent viscosity approach \cite{Sainsaulieu1991} and the path-theory \cite{DMLFM1995}. Indeed, after the work by LeFloch \cite{DMLFM1995,lefloch1989,berthon2012}, a sense to the non-conservative product as a Borel measure is given on the basis of the choice of a family of paths
$\Phi: [0,1] \times \Omega \times \Omega \to \Omega$, i.e. a Lispchitz-continuous map verifying 
$$
\Phi(0; U_L, U_R) = U_L, \quad \Phi(1; U_L, U_R) = U_R, $$
for all $U_L, U_R \in \Omega$, and
$$
\Phi(s; U, U) = U,$$
for every $U \in \Omega$, $s \in [0,1]$. Once the  family of paths has been chosen, a pair of states $U_L = [\rho_L, \theta_L]^T$, $U_R = [\rho_R, \theta_R]^T$ defines a shock wave of speed $\sigma$ compatible with the chosen family if it satisfies
\begin{equation}\label{RH2}
\sigma(U_R - U_L ) = \int_0^1 A(\Phi(s; U_L, U_R))\partial_s \Phi(s; U_L, U_R) \, ds.
\end{equation}

Now, it is worth asking whether it is possible to choose a family of paths $\Phi$ such that jump conditions \eqref{RH} and \eqref{RH2} are equivalent. The following statement gives a negative answer:
\begin{proposition} \label{prop:nopath}
It is not possible to find a family of paths that depends continuously on the parameter $\lambda \in [0, \infty)$ such that the jump condition \eqref{RH2} is equivalent to \eqref{RH} for all $\lambda$.
\end{proposition}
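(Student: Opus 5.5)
The plan is to reduce everything to the degenerate value $\lambda=0$. There the system \eqref{pde} is in conservation form, so the path-dependent condition \eqref{RH2} no longer depends on the path. The viscous condition \eqref{RH}, however, still carries the nonconservative viscous coupling $\frac{2}{\rho}\partial_x\theta\,\partial_x\rho$, and I expect it to differ from the conservative jump condition. Continuity in $\lambda$ then covers the case where one only wants the equivalence for $\lambda>0$.

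\textbf{Step 1 (path independence at $\lambda=0$).} For $\lambda=0$,
$$A(U)=\begin{pmatrix}\cos\theta & -\rho\sin\theta\\ 0 & c\cos\theta\end{pmatrix}=D\mathcal F(U),\qquad \mathcal F(U)=\bigl(\rho\cos\theta,\ c\sin\theta\bigr)^T.$$
Hence for any Lipschitz path the right-hand side of \eqref{RH2} equals $\mathcal F(U_R)-\mathcal F(U_L)$. So \eqref{RH2} is the classical system
$$\sigma(\rho_R-\rho_L)=\rho_R\cos\theta_R-\rho_L\cos\theta_L,\qquad \sigma(\theta_R-\theta_L)=c(\sin\theta_R-\sin\theta_L),$$
whatever family $\Phi$ is chosen.

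\textbf{Step 2 (the two $\theta$-conditions differ).} At $\lambda=0$ the $\rho$-equations coincide, so it suffices to compare $c(\sin\theta_R-\sin\theta_L)=\sigma(\theta_R-\theta_L)$ with $G_\sigma(\theta_R)=G_\sigma(\theta_L)$, where
$$G_\sigma'(\theta)=\frac{c\cos\theta-\sigma}{(\cos\theta-\sigma)^2}.$$
Fix $\theta_L\in(\theta^*,\pi-\theta^*)$ and set $\theta_R=\theta_L+\delta$. Each condition determines a speed $\sigma(\delta)$ that is smooth in $\delta$ with $\sigma(0)=c\cos\theta_L$, and (SW3) holds for small $\delta$ because $c<1$. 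I will expand both speeds to second order in $\delta$.
\begin{itemize}
\item The classical condition gives $\sigma_{\rm cl}=c\bigl(\cos\theta_L-\tfrac{\delta}{2}\sin\theta_L-\tfrac{\delta^2}{6}\cos\theta_L\bigr)+O(\delta^3)$.
\item The viscous condition reads $\int_{\theta_L}^{\theta_L+\delta} w(\theta)\,(c\cos\theta-\sigma)\,d\theta=0$ with the non-constant weight $w=(\cos\theta-\sigma)^{-2}$. Through $w'(\theta_L)\neq0$ this adds a correction proportional to $\delta^2\sin\theta_L\,w'(\theta_L)/w(\theta_L)$ to the $\delta^2$ coefficient.
\end{itemize}
I will check that this correction is nonzero on $\Omega$, which is a direct Taylor computation. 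Then for some small $\delta$ there is a pair $(U_L,U_R)$, with $\rho_R$ fixed by the first equation, that satisfies \eqref{RH} (and (SW2), (SW3) by choosing the sign of $\delta$ appropriately) but violates \eqref{RH2} for every path.

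If it is cleaner, I may instead exhibit one explicit numerical triple $(\theta_L,\theta_R,\sigma)$, using the closed form of $G_\sigma$ from Appendix~\ref{appendix_2}.

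\textbf{Step 3 (role of continuity in $\lambda$).} Suppose $\Phi_\lambda$ depends continuously on $\lambda$ and \eqref{RH2} is equivalent to \eqref{RH} for all $\lambda>0$. Take the triple from Step~2; it is nondegenerate, since $\partial_\sigma$ of the $G$-condition is nonzero at $\lambda=0$. By the implicit function theorem, applied to \eqref{RH} with $G_\sigma$ depending smoothly on $\lambda$, it continues to triples $(U_L,U_R(\lambda),\sigma(\lambda))$ satisfying \eqref{RH} for small $\lambda>0$, hence satisfying \eqref{RH2} with $\Phi_\lambda$. Letting $\lambda\to0$, continuity of $\Phi_\lambda$ and of $A$ in $\lambda$ passes \eqref{RH2} to the limit, which contradicts Step~2.

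The main obstacle is Step~2: verifying that the second-order coefficients genuinely differ on the whole relevant range of $\theta_L$, or producing a clean explicit counterexample. A point to watch there is that $\Omega$ itself depends on $\lambda$ through $\theta^*$, so $\theta_L$ must be chosen in the domain uniformly for $\lambda$ near $0$.
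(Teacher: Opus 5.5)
Your proposal is correct and has the same skeleton as the paper's proof. At $\lambda=0$ the right-hand side of \eqref{RH2} is path-independent and gives the classical conditions \eqref{RHst}. The viscous condition \eqref{RH} at $\lambda=0$, namely $\int_{\theta_L}^{\theta_R}\frac{c\cos\theta-\sigma}{(\cos\theta-\sigma)^2}\,d\theta=0$, is a different condition. Continuity in $\lambda$ then handles small $\lambda>0$.

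The differences lie in the two substantive steps. The paper shows the discrepancy with a single numerical triple satisfying \eqref{RHst} ($c=0.5$, $\theta_L=1$) and evaluates the equivalent integral $\int_{\theta_L}^{\theta_R}\frac{(c\cos\theta-\sigma)(1-(\cos\theta-\sigma)^2)}{(\cos\theta-\sigma)^2}\,d\theta\neq0$. Note that the printed data actually fit \eqref{RHst} with $\theta_R=1.35$, not $1.1$. Your weighted-average expansion instead gives a hand-checkable statement valid for every weak shock. Carrying it out yields $\sigma_{\mathrm{visc}}-\sigma_{\mathrm{cl}}=-\frac{c\sin^2\theta_L}{6(1-c)\cos\theta_L}\,\delta^2+O(\delta^3)$, so the discrepancy is generic rather than a feature of one data set. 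For $\lambda>0$ the paper only says ``by continuity''. Your route makes this precise: continue a simple root of the viscous condition by the implicit function theorem (the condition is affine in $\lambda$), then let $\lambda\to0$ in \eqref{RH2}. Starting from a viscous triple is the convenient direction, since \eqref{RH} is explicit in $(\sigma,\lambda)$, whereas continuing a classical triple would require solving \eqref{RH2} with the unknown paths.

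A few fixes are needed.
\begin{itemize}
\item Your justification of (SW3) ``because $c<1$'' needs $\cos\theta_L\neq0$. At $\theta_L=\pi/2\in\Omega$ the limiting speed $c\cos\theta_L$ equals $\cos\theta_L$, so (SW3) fails and $w$ blows up; choose $\theta_L\neq\pi/2$.
\item Take $\delta>0$. Then $\sigma$ is a $w$-weighted mean of the strictly decreasing function $c\cos\theta$, so $c\cos\theta-\sigma$ changes sign exactly once on $[\theta_L,\theta_R]$. Hence $G_\sigma$ rises then falls, which is (SW2). Also $\partial_\sigma$ of the $G$-condition equals $-\delta\,w(\theta_L)+O(\delta^2)\neq0$, which is what your implicit-function step needs.
\item In Step~3, by \eqref{RH2bisphi} the only $\lambda$-dependence of \eqref{RH2} is $\lambda$ times $\int_0^1\frac{\sin\Phi_\theta}{\Phi_\rho}\partial_s\Phi_\rho\,ds$. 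What you need is that this integral stays bounded as $\lambda\to0$. A uniform Lipschitz bound on the paths (the standard Dal Maso--LeFloch--Murat hypothesis) gives this. Mere continuity of $\Phi_\lambda$, without control of $\partial_s\Phi_\lambda$, would not, since oscillating paths can make such integrals arbitrarily large.
\item Since the statement quantifies over $\lambda\in[0,\infty)$, Steps~1--2 already prove it as written; Step~3 gives the stronger claim for $\lambda>0$.
\item Your concern about $\Omega$ is real but harmless. For small $\lambda$ the radicand defining $\theta^*$ is negative, so $\Omega$ is not defined there at all. One should simply take paths defined near the chosen states, which is what the paper tacitly does.
\end{itemize}
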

\begin{proof}
Let us assume that there exists a family of paths such that both jump relations \eqref{RH} and \eqref{RH2} are equivalent for all $\lambda$. It can be easily checked that
jump condition \eqref{RH2} reads 
\begin{equation}\label{RH2bisphi}
\left\{
\begin{array}{l}
\sigma(\rho_R - \rho_L) = \rho_R \cos(\theta_R) - \rho_L \cos(\theta_L); \\
\smallskip
\displaystyle\sigma(\theta_R - \theta_L) = c\sin(\theta_R) - c\sin(\theta_L) - \lambda \int_0^1 \frac{\sin\left(\Phi_\theta(s)\right)}{\Phi_\rho(s)} \partial_s \Phi_p(s)\,ds.
\end{array}
\right.
\end{equation}
with $\Phi_\rho$ and $\Phi_\theta$ represent the components of $\Phi$ where the dependence on $U_L$, $U_R$ of the paths have been dropped for shortness. On the other hand, \eqref{RH} writes
\begin{equation}\label{RH2bisG}
\left\{
\begin{array}{l}
\sigma(\rho_R - \rho_L) = \rho_R \cos(\theta_R) - \rho_L \cos(\theta_L); \\
\smallskip
\displaystyle\sigma(\theta_R - \theta_L) = c\sin(\theta_R) - c\sin(\theta_L) - \mathcal{G}_\sigma(\theta_R) + \mathcal{G}_\sigma(\theta_L),
\end{array}
\right.
\end{equation}
where  $\mathcal{G}_\sigma$ satisfies
$$
\mathcal{G}'_\sigma(\theta) = \frac{(\sigma - c \cos(\theta))(1  -(\cos(\theta) - \sigma)^2) +  \frac{\lambda \sin(\theta)^2}{\cos(\theta) - \sigma}}{(\cos(\theta) - \sigma)^2}.$$
Therefore, \eqref{RH} and \eqref{RH2} are equivalent if and only if
$$
\lambda \int_0^1 \frac{\sin\left(\Phi_\theta(s)\right)}{\Phi_\rho(s)} \partial_s \Phi_p(s)\,ds
= \mathcal{G}_\sigma(\theta_R) - \mathcal{G}_\sigma(\theta_L),
$$
for all pairs of states $(U_L, U_R)$ that can be linked by an admissible shock. In particular, in the limit of $\lambda$ to 0, the above relation is satisfied if and only if 
\begin{equation}\label{RHconstraint}
\int_{\theta_L}^{\theta_R} \frac{(- \sigma + c \cos(\theta))(1  - (\cos(\theta) - \sigma)^2)}{(\cos(\theta) - \sigma)^2} = 0,
\end{equation}
as long as the triple $(U_L,U_R;\sigma)$ satisfies the jump condition \eqref{RH2bisphi} associated with $\lambda = 0$ as follows:
\begin{equation}\label{RHst}
\left\{
\begin{array}{l}
\sigma(\rho_R - \rho_L) = \rho_R \cos(\theta_R) - \rho_L \cos(\theta_L),\\
\smallskip
\displaystyle\sigma(\theta_R - \theta_L) = c\sin(\theta_R) - c\sin(\theta_L).
\end{array}
\right.
\end{equation}

However, in general it is not possible to verify simultaneously \eqref{RHconstraint} and \eqref{RHst}. For instance, with the following values of the left and right states and the shock velocity: 
$$
U_L = [1,1]^T, U_R = [12.804708058241372, 1.1]^T, \sigma = 0.19178910431251792$$
the jump relation \eqref{RHst} is satisfied for $c = 0.5$, but the condition \eqref{RHconstraint} hold false:
$$
\int_{\theta_L}^{\theta_R} \frac{(- \sigma + c \cos(\theta))(1  - (\cos(\theta) - \sigma)^2)}{(\cos(\theta) - \sigma)^2} = 2.027069051039645.
$$
As a consequence, both jump conditions \eqref{RH2} and \eqref{RH} are not equivalent for $\lambda$ to $0$. By continuity, it is also possible to find pair of states that can be linked by an admissible shock wave for which 1 and 2 do not coincide for sufficiently small positive $\lambda$. The proof is thus completed.
\end{proof}

% \textcolor{red}{
% \begin{remark}
% Observe that, for $\lambda = 0$, System \eqref{pde} becomes a system of conservation laws with numerical flux
% $$
% F(U) = \left[ \begin{array}{c} \rho \cos(\theta))  \\ c \sin(\theta) \end{array} \right]. $$
% According to the proof of Proposition \ref{prop:nopath}, the standard RH condition for this system \eqref{RHst} is not equivalent to the jump condition \eqref{RH} with $\lambda = 0$. Although we are interested in problems with $\lambda > 0$, this fact put on evidence the non-standard character of the jump conditions \eqref{RH}.
% \end{remark}
% }

To conclude this important comment about non-equivalence between path-theory and vanishing viscosity strategy, let us emphasize that the system \eqref{pde} becomes conservative as soon as we enforce $\lambda=0$. As a consequence, \eqref{pde} comes with usual Rankine-Hugoniot conditions easily given by \eqref{RHst}. Next, the path-theory recovers the standard Rankine-Hugoniot relation for system in conservation form, so that it is thus natural to get the standard Rankine-Hugoniot \eqref{RHst} from \eqref{RH2bisphi} in the particular case $\lambda=0$.

The jump conditions compatible with the viscous model \eqref{sys_visquous}, prescribed by the physics, are different from the standard Rankine-Hugoniot conditions \eqref{RHst}. This is due to the presence in \eqref{sys_visquous} of the non-standard viscous term
$$\frac{2}{\rho} \partial_x \theta \partial_x \rho.
$$
In fact, the reader will be easily convinced that, by removing the above viscosity product, the jump conditions compatible with the modified viscous problem turns out to be
\begin{equation}\label{RHmod}
\left\{
\begin{array}{l}
\sigma(\rho_R - \rho_L) = \rho_R \cos(\theta_R) - \rho_L \cos(\theta_L); \\
\smallskip
\displaystyle\sigma(\theta_R - \theta_L) = c\sin(\theta_R) - c\sin(\theta_L) - \lambda \int_{\theta_L}^{\theta_R} \frac{\sin(\theta)^2}{(\cos(\theta) - \sigma)} \, d\theta,
\end{array}
\right.
\end{equation}
to recover the standard Rankine-Hugoniot condition when $\lambda = 0$. Moreover, in this strongly particular case it is possible to find families of paths such that \eqref{RH2bisphi} and \eqref{RHmod} are equivalent.

Moreover, in \eqref{RH2bisG}, the term
$$
\mathcal{G}_\sigma(\theta_R) - \mathcal{G}_\sigma(\theta_L),
$$
can be understood as the contribution of non-conservative products in the vanishing viscosity regime through a shock compatible with the viscous model \eqref{sys_visquous}.

\begin{figure}[htpb]
\begin{center}
			\includegraphics[width=0.8\linewidth]{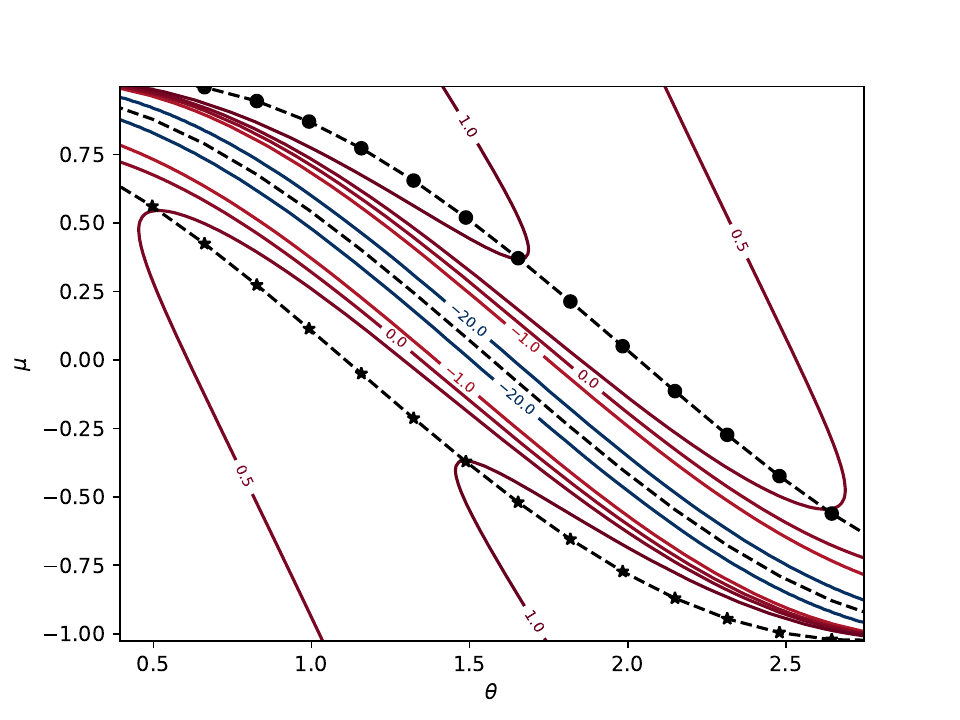}
			\caption{Some contour lines of the function $G_\sigma(\theta)$ for $\theta \in [\theta^*,\pi-\theta^*]$, $\sigma \in [\overline{\mu}_-,1)$  with $c = 0.8$, $\lambda = 0.2$. The curve $\sigma = \mu_-(\theta)$ (dashed black line with stars), $\sigma = cos(\theta)$ (dashed black line) and $\sigma = \mu_+(\theta)$ (dashed black line with circles)  are also shown.}
		\label{fig:graphG}
        \end{center}
	\end{figure}

\subsection{Shock waves}
Equipped with the Generalized Rankine-Hugoniot relations \eqref{RH} and the conditions (SW1)-(SW2)-(SW3), we now exhibit the shock waves and the associated Hugoniot curves. To address such an objective, two given states $U_L, U_R \in \Omega$ are connected by an admissible shock wave as soon as we are able to characterize a shock velocity $\sigma$ such that 
\begin{equation}\label{eqG}
G_\sigma(\theta_R) - G_\sigma(\theta_L) = 0,
\end{equation}
so that (SW1) is satisfied where $G_\sigma$ is defined by \eqref{G_sigma}. As a consequence, $\sigma$ has to be a root of the function
\begin{equation}\label{F(sigma)}
\sigma \in [\overline{\mu}_-, \overline{\mu}_+ ] \to F(\sigma) = G_\sigma(\theta_R) - G_\sigma(\theta_L).
\end{equation}
First, we assume $ \theta_L < \theta_R$.
Before discussing the roots of this function, it is helpful to have an idea of the behavior of  $G_\sigma(\theta)$ as a function of both $\sigma$ and $\theta$. In Figures \ref{fig:graphG} and \ref{fig:graphG2}, we present some contour lines of $G$  in the
regions $(\theta, \sigma) \in [\theta^*, \pi-\theta^*] \times [\overline{\mu}_-, 1)$ and $(\theta, \sigma) \in [\theta^*, \pi-\theta^*] \times (1,\overline{\mu}_+]$ respectively. The curves $\sigma = \mu_\pm(\theta)$, $\sigma = \cos(\theta)$ are also displayed.
We note that the function $G_\sigma$ has two asymptotes, one on the curve $\sigma = \cos(\theta)$ where it goes to $-\infty$ and another one in 
$\sigma = 1$. Moreover, the function $G_\sigma$ diverges to $-\infty$ as $\sigma$ goes to $1$ from above and to $\infty$ as $\sigma$ goes to $1$ from below.

\begin{figure}[htpb]
\begin{center}
			\includegraphics[width=0.8\linewidth]{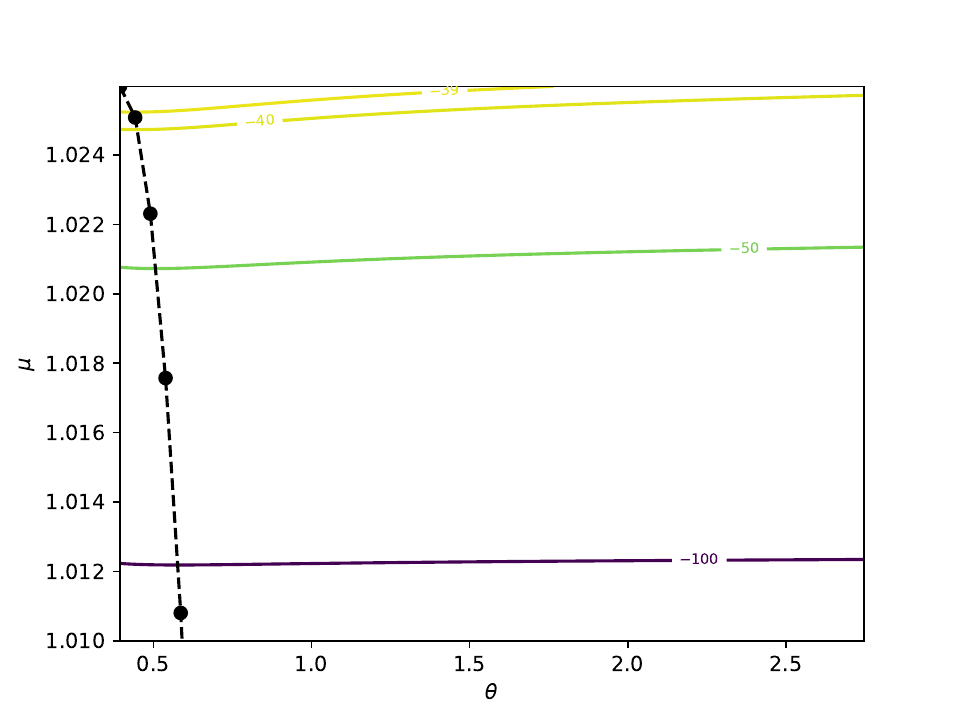}
			\caption{Some contour lines of the function $G_\sigma(\theta)$ for $\theta \in [\theta^*,\pi-\theta^*]$, $\sigma \in (1,\overline{\mu}_+]$ with $c = 0.8$, $\lambda = 0.2$. The curve $\sigma = \mu_+(\theta)$ (dashed black line with circles) is also shown.}
		\label{fig:graphG2}
        \end{center}
\end{figure}

Next, Figure \ref{fig:graphF} displays an example of the graph of a function $F$ given by \eqref{F(sigma)} according to fixed values for $\theta_L$, $\theta_R$ and given parameters $c$ and $\lambda$. The two asymptotes at $\sigma = \cos(\theta_R)$ and $\sigma = \cos(\theta_L)$ are clearly characterized, when it diverges to $-\infty$ and $\infty$ respectively. In addition, we observe that the asymptote in $\sigma = 1$ disappears since we have
\begin{align*}
F(1) &= G_1(\theta_R) - G_1(\theta_L),\\
&=\int_{\theta_L}^{\theta_R} \frac{- 1 + c \cos(\theta) - \frac{\lambda \sin(\theta)^2}{\cos(\theta) - 1}}{(\cos(\theta) - 1)^2} < \infty.
\end{align*}
Observe that $F$ has three real roots. To analyze  which of the roots might correspond to the speed of an admissible shock wave, let us examine the behavior of the function $G_\sigma(\theta)$ depending on the value of $\sigma$.

\begin{figure}[htpb]
\begin{center}
			\includegraphics[width=0.8\linewidth]{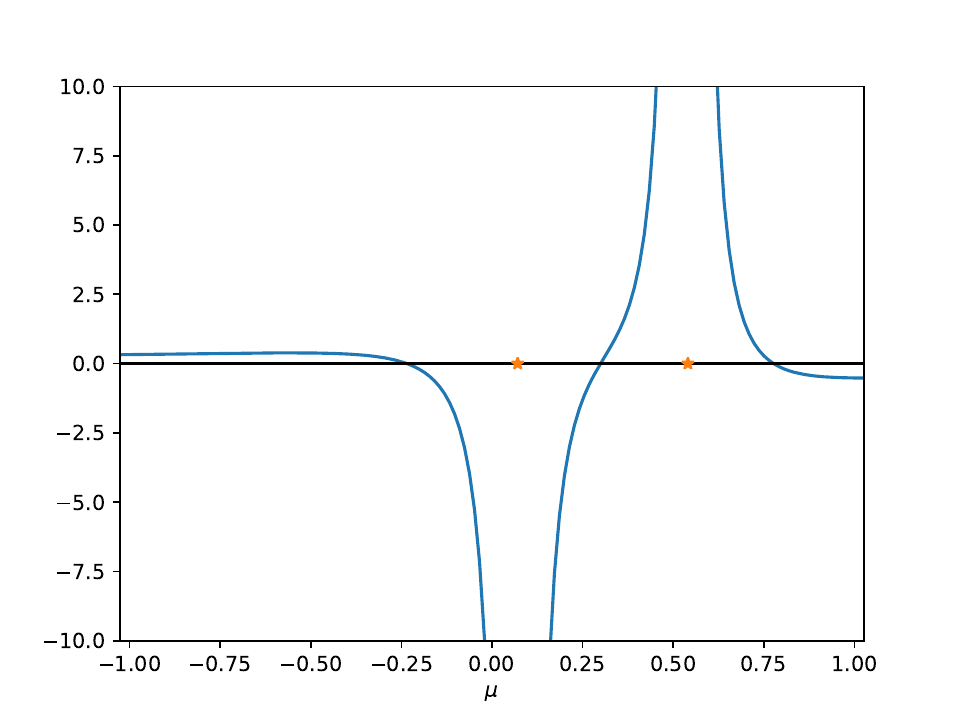}
			\caption{Graph of the function $F(\sigma)$ for $\theta_L = 1$, $\theta_R = 1.5$, $c$ = 0.8,
$\lambda$ = 0.2. $\cos(\theta_L)$ and $\cos(\theta_R)$ are marked in the horizontal axis.}
		\label{fig:graphF}
        \end{center}
\end{figure}

From the information given by Figures \ref{fig:graphG} and \ref{fig:graphG2}, since we have 

\begin{eqnarray*}
G_\sigma'(\theta) = 0 & \iff & (c + \lambda) \cos^2(\theta) - (1 + c)\sigma \cos(\theta) + \sigma^2 - \lambda \\
&\iff& \theta = \chi_\pm(\sigma)
\end{eqnarray*}
 where the functions $\chi_\pm$, given by \eqref{invav}, stand for the inverse of the eigenvalues $\mu_\pm$, we immediately deduce that the behavior of the function
$\theta \to G_\sigma(\theta)$ with 
$\sigma \in [\overline{\mu}_-,\overline{\mu}_+]$ is as follows:
\begin{itemize}
\item If $\sigma \in [-1,1)$:
\begin{itemize}
\item the function $G_\sigma(\theta)$ is increasing in $[\theta^*, \chi_+(\sigma)]$;
\item the function $G_\sigma(\theta)$ is decreasing in $[\chi_+(\sigma), \arccos(\sigma))$;
\item the function $G_\sigma(\theta)$ is increasing in $(\arccos(\sigma), \chi_-(\sigma)]$;
\item the function $G_\sigma(\theta)$ is decreasing in $[\chi_-(\sigma), \pi - \theta^*]$;
\item and
$$\lim_{\theta \to \arccos(\sigma)^\pm} G_\sigma(\theta) = -\infty.$$ 
\end{itemize}
\item If $\sigma < -1$:
\begin{itemize}
\item the function $G_\sigma(\theta)$ is increasing in $[\theta^*, \chi_+(\sigma)]$;
\item the function $G_\sigma(\theta)$ is decreasing in $[\chi_+(\sigma),  \pi - \theta^*]$.
\end{itemize}
\item If $\sigma > 1$:
\begin{itemize}
\item the function $G_\sigma(\theta)$ is increasing in $[\theta^*, \chi_-(\sigma)]$;
\item the function $G_\sigma(\theta)$ is decreasing in $[\chi_-(\sigma),  \pi - \theta^*]$.
\end{itemize}
\end{itemize}
The three above behaviors of the function $G_\sigma$ are presented in Figure \ref{fig:graphG(theta)}. 

\begin{figure}[htpb]
\begin{center}
			\includegraphics[width=0.8\linewidth]{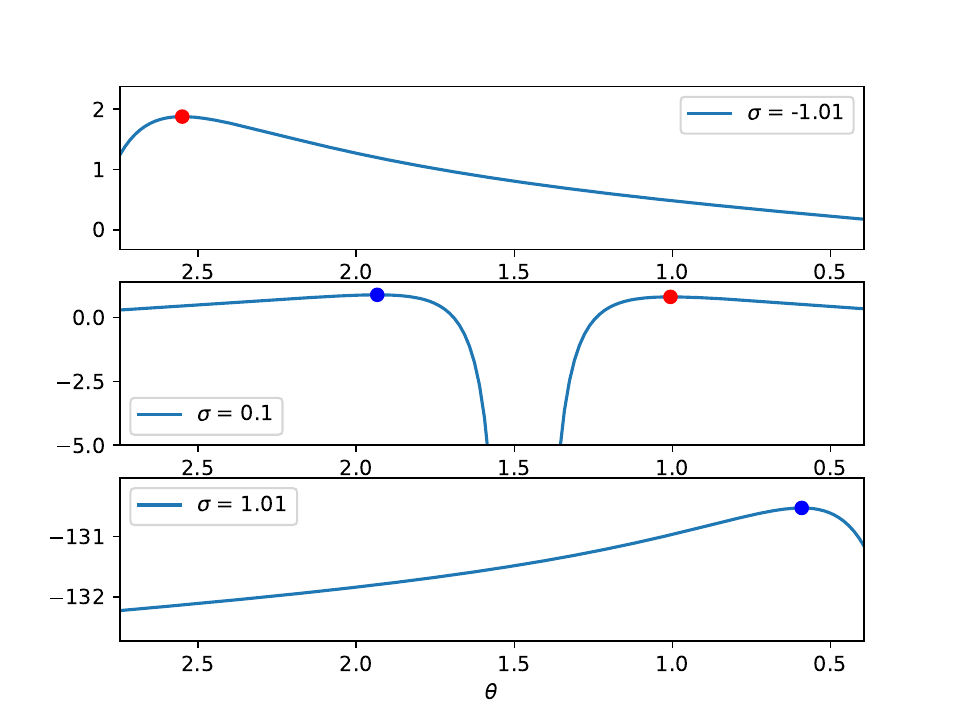}
			\caption{Graph of the functions $G_\sigma(\theta)$ for $\sigma = -1.01$, $\sigma = 0.1$, $\sigma = 1.01$ for $c$ = 0.8,
$\lambda$ = 0.2. The values at the points
$\chi_+(\sigma)$ and $\chi_-(\sigma)$ are marked with red and blue circles respectively.}
		\label{fig:graphG(theta)}
        \end{center}
	\end{figure}

By combining all of the previously properties of $G_\sigma$, we are now able to identify the set of required roots for the function F defined by \eqref{F(sigma)}:

\begin{theorem}
Given $\theta_L, \theta_R \in [\theta^*, \pi- \theta^*]$ with $\theta_L < \theta_R$, the equation \eqref{eqG} has three distinct real roots ordered as follows:
$$\sigma_- < \sigma_0 < \sigma_+.$$
Only $\sigma_-$ and $\sigma_+$ can be the speeds of shock waves compatible with the viscous jump relation (SW1)-(SW2)-(SW3) that link the pair of states $(U_L, U^-)$ and $(U_L, U^+)$ respectively, with
\begin{equation} \label{shocks}
U_L = \left[ \begin{array}{c} \rho_L \\ \theta_L \end{array} \right], 
\quad U^\pm_R = \left[ \begin{array}{c}\displaystyle \rho_L \left(\frac{\sigma_\pm -\cos(\theta_L)}{\sigma_\pm -\cos(\theta_R)} \right) \\ \theta_R \end{array} \right]
\end{equation}
for all given $\rho_L > 0$. Moreover, the shock waves are also compatible with Lax's entropy conditions:
\begin{equation}\label{Lax}
\mu_{\pm}(\theta_R) < \sigma_\pm < \mu_\pm(\theta_L).
\end{equation}
\end{theorem}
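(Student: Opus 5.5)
I would study $F(\sigma)=G_\sigma(\theta_R)-G_\sigma(\theta_L)=\int_{\theta_L}^{\theta_R}G'_\sigma(\theta)\,d\theta$ on the three intervals cut out by its two singular points, $\cos(\theta_R)<\cos(\theta_L)$. On $\Omega$ we have $\theta_L<\theta_R$, so $\cos\theta_R<\cos\theta_L$.

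\textbf{Asymptotics.} For $\sigma$ near $\cos(\theta)$ the dominant term in \eqref{G_sigma} is $-\lambda\sin^2\theta/(\cos\theta-\sigma)^3$. This term gives $G_\sigma\to-\infty$ at the asymptote $\sigma=\cos\theta$. It follows that $F\to-\infty$ as $\sigma\to\cos(\theta_R)$ and $F\to+\infty$ as $\sigma\to\cos(\theta_L)$, each from the outer side, and that $F$ changes sign across each singularity.

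\textbf{Signs at the extreme speeds.} At $\sigma=\overline{\mu}_-$ we have $\chi_+(\sigma)=\pi-\theta^*$. By the monotonicity table preceding the theorem, $G_\sigma$ is then increasing on the whole of $[\theta^*,\pi-\theta^*]$, so $F(\overline{\mu}_-)>0$. Symmetrically, at $\sigma=\overline{\mu}_+$ we have $\chi_-(\sigma)=\theta^*$, so $G_\sigma$ is decreasing and $F(\overline{\mu}_+)<0$. The intermediate value theorem then gives a root $\sigma_-\in(\overline{\mu}_-,\cos\theta_R)$, a root $\sigma_0\in(\cos\theta_R,\cos\theta_L)$ and a root $\sigma_+\in(\cos\theta_L,\overline{\mu}_+)$.

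\textbf{Uniqueness in each interval.} I would show that each of these roots is unique by checking that $\partial_\sigma F$ has a fixed sign on each interval. If that fails, I would instead use the structure of $G_\sigma$ described below. This uniqueness step is the main obstacle. The fallback is to view the level sets $G_\sigma(\theta)=G_\sigma(\theta_L)$ as curves in the $(\theta,\sigma)$ plane, as in Figures \ref{fig:graphG}--\ref{fig:graphG2}. One then argues, using the single interior maximum of $G_\sigma$ at $\chi_\pm(\sigma)$, that for a fixed $\theta_L$ the map $\sigma\mapsto\theta_R(\sigma)$ is strictly monotone on each branch.

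\textbf{Admissibility of $\sigma_-$ and $\sigma_+$.} Take $\sigma=\sigma_-<\cos\theta_R$. Then $\cos\theta\neq\sigma$ on $[\theta_L,\theta_R]$, which gives (SW3). The condition $G_\sigma(\theta_L)=G_\sigma(\theta_R)$, combined with the fact that $G_\sigma$ increases up to $\chi_+(\sigma)$ and then decreases (case $\sigma<\cos\theta_R$ of the table), forces $\theta_L<\chi_+(\sigma_-)<\theta_R$. Hence $G_\sigma>G_\sigma(\theta_L)$ strictly inside the interval, which gives (SW2). Since $\mu_-$ is decreasing with inverse $\chi_+$, the inequality $\theta_L<\chi_+(\sigma_-)<\theta_R$ is exactly $\mu_-(\theta_R)<\sigma_-<\mu_-(\theta_L)$, i.e.\ Lax's condition \eqref{Lax}.

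The case $\sigma_+>\cos\theta_L$ is the same. Here the relevant extremum is the maximum at $\chi_-(\sigma_+)$, on the increasing branch of $\mu_+$ restricted to $\Omega$, and the same argument gives (SW2), (SW3) and \eqref{Lax}.

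\textbf{The root $\sigma_0$ is inadmissible.} Since $\cos\theta_R<\sigma_0<\cos\theta_L$, the value $\arccos(\sigma_0)$ lies in $(\theta_L,\theta_R)$. This violates (SW3).

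\textbf{Density of the right state.} The first relation of \eqref{RH} is linear in $\rho_R$. Solving it gives $\rho_R=\rho_L(\sigma-\cos\theta_L)/(\sigma-\cos\theta_R)$, which is \eqref{shocks}. For $\sigma_\pm$ the numerator and denominator have the same sign, so $\rho_R>0$.
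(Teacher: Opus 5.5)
Your proof is correct in substance but is organized differently from the paper's.

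The paper brackets $\sigma_-$ by evaluating $F$ at two specific speeds. At $\mu_-(\theta_R)$ it has $F>0$. At $\mu_-(\theta_L)$ it has $F<0$ when $\mu_-(\theta_L)<\cos\theta_R$, and otherwise it uses the pole at $\cos\theta_R$. Lax's inequality is read off from this bracket, and (SW2) is deduced afterwards from the resulting position of the critical point of $G_{\sigma_-}$.

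You instead bracket with the extreme speeds $\overline{\mu}_\pm$ and the poles. You then get the position of the interior maximum from the root equation itself: a function that increases then decreases and takes equal values at $\theta_L,\theta_R$ must peak strictly inside. This gives (SW2) and Lax at once.

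Your route has several advantages:
\begin{itemize}
\item it avoids the case distinction;
\item it uses the integral representation of $F$ on the outer intervals, so continuity across $\sigma=\pm1$ is automatic;
\item it shows that \emph{every} root below $\cos\theta_R$ (resp.\ above $\cos\theta_L$) is admissible and Lax-compatible, so these conclusions do not depend on the uniqueness you left open.
\end{itemize}
The paper's route, for its part, gives an explicit sign-change bracket that is convenient for numerical root-finding. The paper does not prove uniqueness of the root in each interval either, so your unfinished uniqueness step is not something its proof supplies. You also establish positivity of $\rho_R$, which the paper leaves implicit.

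Two slips should be corrected.

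First, $F$ does not change sign across its singularities. Near $\sigma=\cos\theta$ the leading part of $G_\sigma(\theta)$ is $-\lambda\sin\theta/\bigl(2(\cos\theta-\sigma)^2\bigr)$, a double pole. Hence $F\to-\infty$ on both sides of $\cos\theta_R$ and $F\to+\infty$ on both sides of $\cos\theta_L$, consistent with your own remark that $G_\sigma\to-\infty$ from both sides. The existence of $\sigma_0$ still follows, since $F$ runs from $-\infty$ to $+\infty$ on $(\cos\theta_R,\cos\theta_L)$. On that interval, however, $F$ must be defined through a fixed primitive (e.g.\ the closed form of the appendix), because $\int_{\theta_L}^{\theta_R}G'_\sigma\,d\theta$ diverges there.

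Second, $\mu_+$ is decreasing, not increasing, on $[\theta^*,\pi-\theta^*]$. Write $G'_\sigma(\theta)=(\sigma-\mu_-(\theta))(\sigma-\mu_+(\theta))/(\cos\theta-\sigma)^3$ and use $\sigma_+>\cos\theta>\mu_-(\theta)$. Then the sign of $G'_{\sigma_+}$ is that of $\mu_+(\theta)-\sigma_+$. It is the decrease of $\mu_+$ that makes $\chi_-(\sigma_+)$ a maximum and gives $\mu_+(\theta_R)<\sigma_+<\mu_+(\theta_L)$ rather than the reverse.
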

\begin{proof}
To begin with, there is always a root between the two asymptotes (see Figure \ref{fig:graphF})
$$\sigma_0 \in (\cos(\theta_R), \cos(\theta_L)).$$
Nevertheless, this root cannot correspond to an admissible shock wave since (SW3) is not satisfied.

On the other hand, since
$ \theta \to G_{\mu_-(\theta_R)}(\theta)$
is an increasing function over $[\theta^*, \theta_R]$, we get
$$ F(\mu_-(\theta_R)) = G_{\mu_-(\theta_R)}(\theta_R) - G_{\mu_-(\theta_R)}(\theta_L)  > 0.$$
If $\mu_-(\theta_L) < \cos(\theta_R)$, knowing that 
$ \theta \to G_{\mu_-(\theta_R)}(\theta)$
is a decreasing function over $[\theta_L, \allowbreak \arccos(\mu_-(\theta_L))]$, we now obtain
$$ F(\mu_-(\theta_L)) = G_{\mu_-(\theta_L)}(\theta_R) - G_{\mu_-(\theta_L)}(\theta_L)  < 0,$$
and necessarily there exists a root within the interval $(\mu_-(\theta_R), \mu_-(\theta_L))$. Otherwise, if $\mu_-(\theta_L) > \cos(\theta_R)$, there is a root in
$[\mu_-(\theta_R), \cos(\theta_R)]$. As a consequence, there is a second root
$$
\sigma_- \in (\mu_-(\theta_R), \min(\cos(\theta_R), \mu_-(\theta_L))).$$
(SW1) is then satisfied for $\sigma_-$. Moreover, let us underline that this root $\sigma_-$ directly satisfies Lax's entropy condition \eqref{Lax}. 

Next, we establish that (SW2) holds true. Since $\chi_-$ is decreasing function with respect to $\sigma$, we immediately get
$$ \theta_L < \chi_-(\sigma_-) < \theta_R.$$
On the other hand,  $\sigma_- < \cos(\theta_R)$ leads to
$$ \theta_R < \arccos(\sigma_-).$$
We have then
$$\theta^* \leq \theta_L < \chi_-(\sigma_-) < \theta_R < \arccos(\sigma^-).$$
Next, since the function
$\theta \to G_{\sigma_-}(\theta)$
is increasing over $[\theta^*, \chi_-(\sigma_-) ]$ and decreasing over $[\chi_-(\sigma_-) , \arccos(\sigma_-) ]$, and since we have
$$G_{\sigma_-}(\theta_L) = G_{\sigma_-}(\theta_R), $$
necessarily we obtain
$$ G_{\sigma_-}(\theta) - G_{\sigma_-}(\theta_L) > 0, \quad \forall \theta \in [\theta_L, \theta_R],
$$
so that (SW2) is satisfied. Therefore, given $\rho_L > 0$, $\sigma_-$ is the speed of an admissible shock wave linking
$U_L = [\rho_L, \theta_L]^T$, $U^-_R = [\rho_R, \theta_L]^T$ if 
$$
\sigma_- (\rho_R - \rho_L) = (\cos(\theta_R) -  \cos(\theta_L)),$$
i.e. if  \eqref{shocks} is satisfied. To conclude with the root $\sigma_-$, let us note that, by construction of the function $F$ given by \eqref{F(sigma)}, the condition (SW3) is also satisfied.

A similar reasoning shows that there is a third root
$$
\sigma_+ \in [\max(\mu_+(\theta_R),\cos(\theta_L)), \mu_+(\theta_L)],$$
satisfying both (SW1)-(SW2)-(SW3) and Lax's entropy condition \eqref{Lax} connecting pair of states of the form \eqref{shocks}. The proof is thus completed.
\end{proof}

Next, assuming $\theta_L > \theta_R$, a similar reasoning can be followed by simply exchanging the roles of $\theta_L$ and $\theta_R$. As a consequence, once again three roots can be exhibited but now $\sigma_\pm$ satisfy 
$$
\mu_-(\theta_L) < \sigma_\pm < \mu_-(\theta_R)$$
and
$$
G_{\sigma_\pm} (\theta) - G_{\sigma_\pm}(\theta_L) \geq 0, \quad \forall \theta \in (\theta_R, \theta_L)$$
so that the corresponding shock waves are not admissible. 

Therefore, given a state $U_L \in \Omega$, the Hugoniot curve composed by the states that can be connected by an admissible shocks are given by 
\begin{eqnarray}
\nonumber \mathcal{S}_-(U_L) & = & \Biggl\{\left[ \begin{array}{c}\displaystyle \rho_L \left(\frac{\sigma_- -\cos(\theta_L)}{\sigma_- -\cos(\theta_R)} \right) \\ \theta_R \end{array} \right], \quad \theta_R \in (\theta_L, \pi-\theta^*) \\
& &\qquad  \qquad \qquad \text{ where $\sigma_-$ is the smallest root of \eqref{eqG}} \Biggr\}, \label{S-}\\
\nonumber \mathcal{S}_+(U_L) & = & \Biggl\{\left[ \begin{array}{c}\displaystyle \rho_L \left(\frac{\sigma_+ -\cos(\theta_L)}{\sigma_+ -\cos(\theta_R)} \right) \\ \theta_R \end{array} \right], \quad \theta_R \in (\theta_L, \pi-\theta^*) \\
& & \qquad \qquad \qquad \text{ where $\sigma_+$ is the largest root of \eqref{eqG}} \Biggr\}, \label{S+}
\end{eqnarray}
Examples of the curves $\mathcal{S}_\pm(U_L)$ are displayed in Figure \ref{fig:Simple_curves}.

\begin{figure}[htpb]
\begin{center}
			\includegraphics[width=0.7\linewidth]{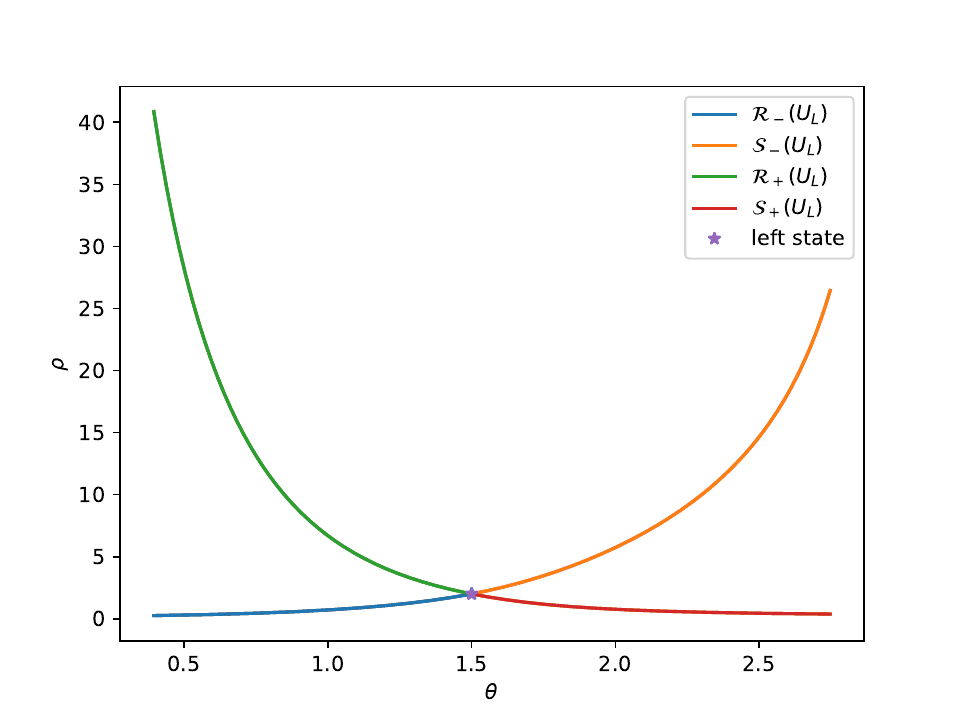}
			\caption{Simple  curves for the left state $U_L = [2, 1.5]^T$ for $c$ = 0.8,
$\lambda$ = 0.2. }
		\label{fig:Simple_curves}
        \end{center}
	\end{figure}

As it has been said in the Introduction, the Rankine-Hugoniot condition \eqref{RH_con} corresponding to the conservative formulation of the system \eqref{scl} is not equivalent to \eqref{RH}: in effect, Figure \ref{fig:Hugoniot_comparison} shows a comparison of  $S_-(U_L)$ with the curve $S_-^{\text{con}}(U_L)$ formed by the right states that can be connected with $U_L$ by admissible shocks satisfying \eqref{RH_con}, which is given by:
\begin{eqnarray}\nonumber 
\mathcal{S}_-^{\text{con}}(U_L) & = & \Biggl\{\left[ \begin{array}{c}\displaystyle \rho_L \left(\frac{\sigma_-^{\text{con}} -\cos(\theta_L)}{\sigma_-^{\text{con}} -\cos(\theta_R)} \right) \\ \theta_R \end{array} \right], \quad \theta_R \in (\theta_L, \pi-\theta^*) 
\\& &\qquad \text{where $\sigma_-^{\text{con}}$ is the smallest root of} \nonumber
\\& &\qquad c \big(f_2(\theta_R) - f_2(\theta_L)\big) - \lambda \ln\left(\frac{\cos(\theta_L) - \sigma}{\cos(\theta_R) - \sigma}\right) - \sigma \big(f_1(\theta_R) - f_1(\theta_L)\big) = 0 \Biggr\}.\nonumber
\end{eqnarray}
The curves in the Figure \ref{fig:Hugoniot_comparison} corresponds to $U_L = [1,1]^T$ and it can be seen that they are quite different for shocks of large amplitude.

\begin{figure}[htpb]
\begin{center}
			\includegraphics[width=0.7\linewidth]{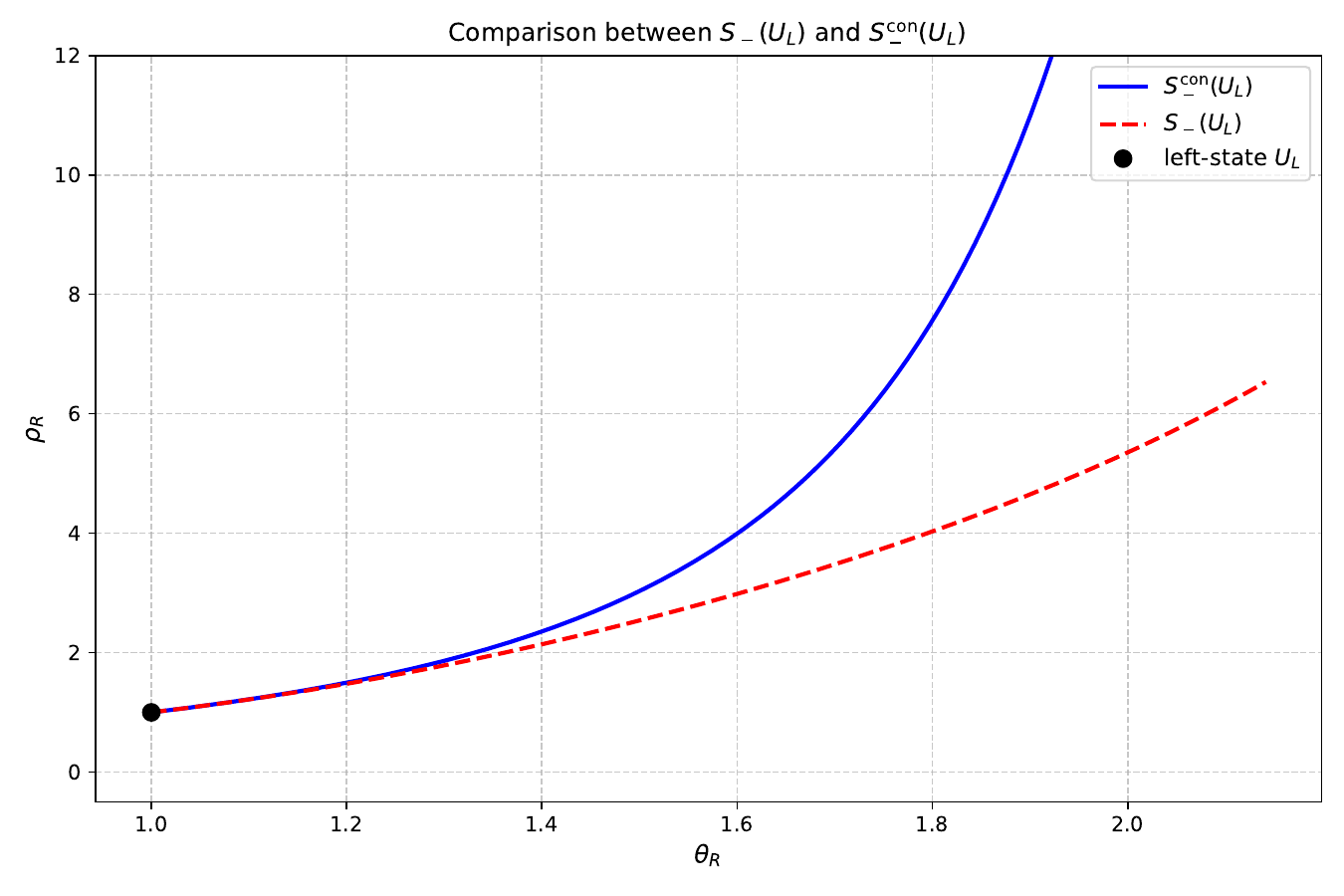}
			\caption{Comparison between $S_-(U_L)$ and $S_-^{\text{con}}(U_L)$ for the left state $U_L = [1, 1]^T$ with $c$ = 0.8,
$\lambda$ = 0.2. }
		\label{fig:Hugoniot_comparison}
        \end{center}
	\end{figure}

%%%%%%%%%%%%%%%%%%%%%%%%%%%%%%%%%%%%%%%%%%%%%%%%%%%%%%%%%%%
%%%%%%%%%%%%%%%%%%%%%%%%%%%%%%%%%%%%%%%%%%%%%%%%%%%%%%%%%%%
%%%%%%%%%%%%%%%%%%%%%%%%%%%%%%%%%%%%%%%%%%%%%%%%%%%%%%%%%%%

\section{The Riemann problem}
In this section we discuss the solution of the Riemann problem associated to \eqref{pde} with initial conditions
$$
U(x,0 )  = \begin{cases}
    U_L & \text{if $x < 0$;} \\
    U_R & \text{ if $x > 0$.}
\end{cases}$$
To address this problem, we consider the  curve
$$
\mathcal{C}_-(U_L ) = \mathcal{R}_-(U_L) \cup \mathcal{S}_-(U_L).$$
formed by the possible right-states of simple waves (rarefactions or shock waves) that have $U_L$ as left state. We also introduce the curve of the possible left states of simple curves that have $U_R$ as right state as follows:
$$
\mathcal{C}_+^b(U_R) = \mathcal{R}^b_+(U_R) \cup \mathcal{S}^b_+(U_R)$$
where
\begin{equation}\label{Rb+}
\mathcal{R}^b_+(U_R) = \left\{ \left[\begin{array}{c} \rho_R\exp(F_+(\theta_L)-F_+(\theta_R)) \\ \theta_L \end{array} \right], \quad \theta_L \in [\theta_R, \pi - \theta^*]\right\},
\end{equation}
\begin{eqnarray}
\nonumber\mathcal{S}^b_+(U_R) & = & \Biggl\{\left[ \begin{array}{c}\displaystyle \rho_R \left(\frac{\sigma_+ -\cos(\theta_R)}{\sigma_+ -\cos(\theta_L)} \right) \\ \theta_L \end{array} \right], \quad \theta_L \in (\theta^*, \theta_R) \\
& & \qquad \qquad \qquad \text{ where $\sigma_+$ is the largest root of \eqref{eqG}} \Biggr\}. \label{Sb+}
\end{eqnarray}

In order to solve the Riemann problem, we need to find the intersection of the curves $\mathcal{C}_-(U_L )$ and $\mathcal{C}_+^b(U_R)$. In other words, we have now to consider the intersections of the graphs of the two following functions:
$$
\mathcal{F}_-(\theta) = \begin{cases}
\rho_L\exp(F_-(\theta)-F_-(\theta_L)) & \text{if $\theta \leq \theta_L$;}\\
\displaystyle \rho_L \left(\frac{\sigma_- -\cos(\theta_L)}{\sigma_- -\cos(\theta_R)} \right) & \text{otherwise;}
\end{cases}
$$
and 
$$
\mathcal{F}_+^b(\theta) = \begin{cases}
\displaystyle\rho_R \left(\frac{\sigma_+ -\cos(\theta_R)}{\sigma_+ -\cos(\theta)} \right) & \text{if $\theta \leq \theta_R$;}\\
\rho_R\exp(F_+(\theta)-F_+(\theta_R)) & \text{otherwise.}
\end{cases}
$$
Since $\mathcal{F}_-$ is an increasing function and $\mathcal{F}^b_+$ is a decreasing one, these two functions may eventually not intersect. As a consequence, the Riemann problem may have no solution in $\Omega$ although it can have one or more solutions in $(0, \infty) \times (-\pi, \pi]$. This non-existence of solution may occur if $$\mathcal{F}_-(\pi-\theta^*) < \mathcal{F}^b_+(\pi-\theta^*) \quad \text{or} \quad\mathcal{F}^b_+(\theta^*) < \mathcal{F}_-(\theta^*).$$ This situation is exemplified in Figure \ref{fig:PR_case1}.
\begin{figure}[htpb]
  \centering
			\includegraphics[width=0.45\linewidth]{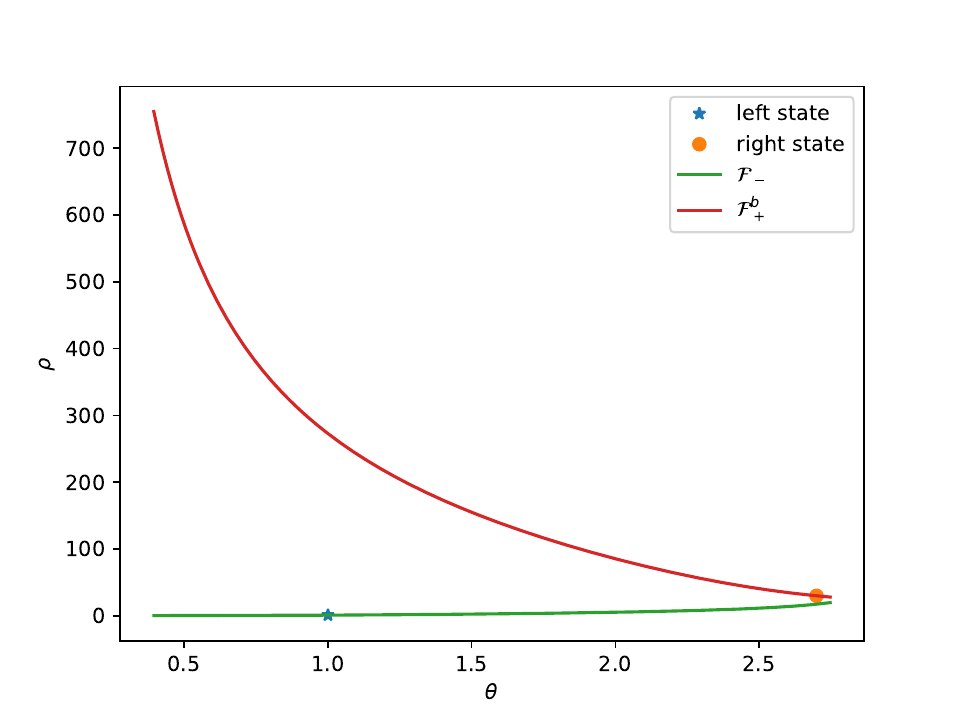}
			\includegraphics[width=0.45\linewidth]{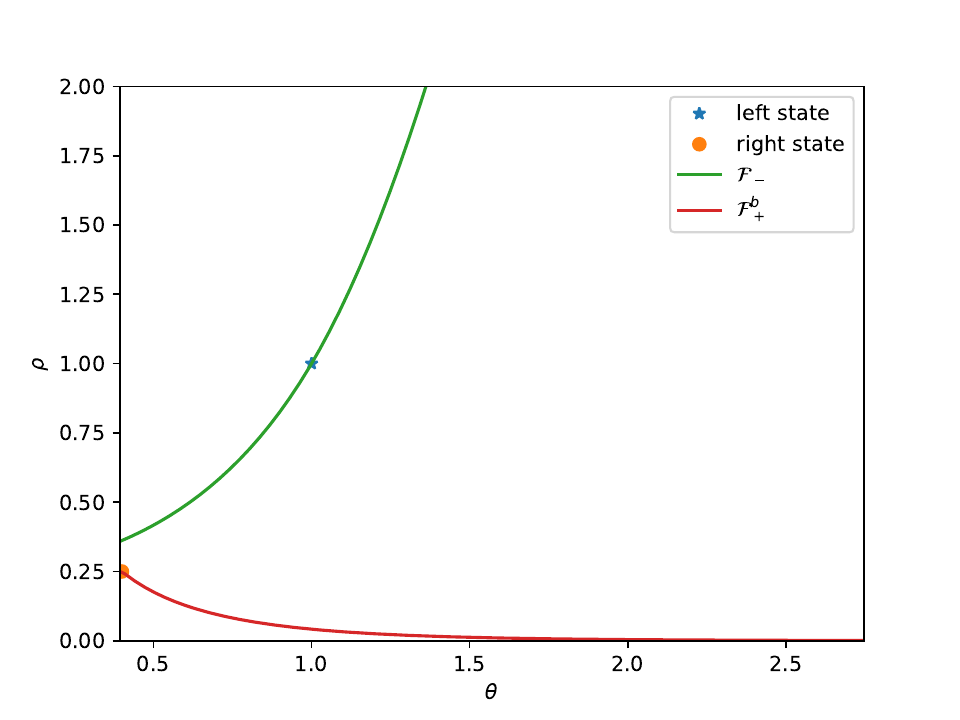}
	\caption{Riemann problem: examples of no solution in $\Omega$. Curves $\mathcal{C}_-(U_L)$ and $\mathcal{C}^b_+(U_R)$ for $U_L = [1,1]^T$, $U_R = [30,2.7]^T$ (left) and $U_L = [1,1]^T$, $U_R = [0.25,0.4]^T$ (right) and parameters $c$ = 0.8, $\lambda$ = 0.2.}
	\label{fig:PR_case1}
\end{figure}

Next, in all other cases, there is a unique $\theta_m \in [\theta^* , \pi - \theta^*]$ such that
$$\mathcal{F}_-(\theta_m) = \mathcal{F}_+^b(\theta_m).$$
Then the structure of the classical solution depends on the relative position of the intermediate state $\theta_m$ with respect to 
$\theta_L$ and $\theta_R$. The four possible configurations are now detailed and illustrated with the parameters given by $c = 0.8$ and $\lambda = 0.2$.

\begin{enumerate}

\item If $\theta_L < \theta_m < \theta_R$ the solution of the Riemann problem consists of two shocks. 
\begin{itemize}
\item \textbf{Example:} $U_L =[1,1]^T$, $U_R = [1,2]^T$. In this case, the intermediate state is 
$U_m = [2.580231243693331, 1.5098899053692902]^T$ and the solution is given by 
$$
U(x,t) = \begin{cases} 
U_L & \text{if $x < \sigma_ t$;}\\
U_m & \text{if $\sigma_- t < x < \sigma_+ t$;}\\
U_R & \text{if $x > \sigma_+t$;}
\end{cases}
$$
with 
$$\sigma_- = -0.24252576951298943, \quad \sigma_+ = 0.3627332052302428.$$
In Figure \ref{fig:PR_2shocks} we display the curves $\mathcal{C}_-(U_L)$ and $\mathcal{C}^b_+(U_R)$ and the graph of the solution at time $t = 1$.
\begin{figure}[htpb]
	\begin{subfigure}[h]{0.45\textwidth}
		\centering
			\includegraphics[width=\linewidth]{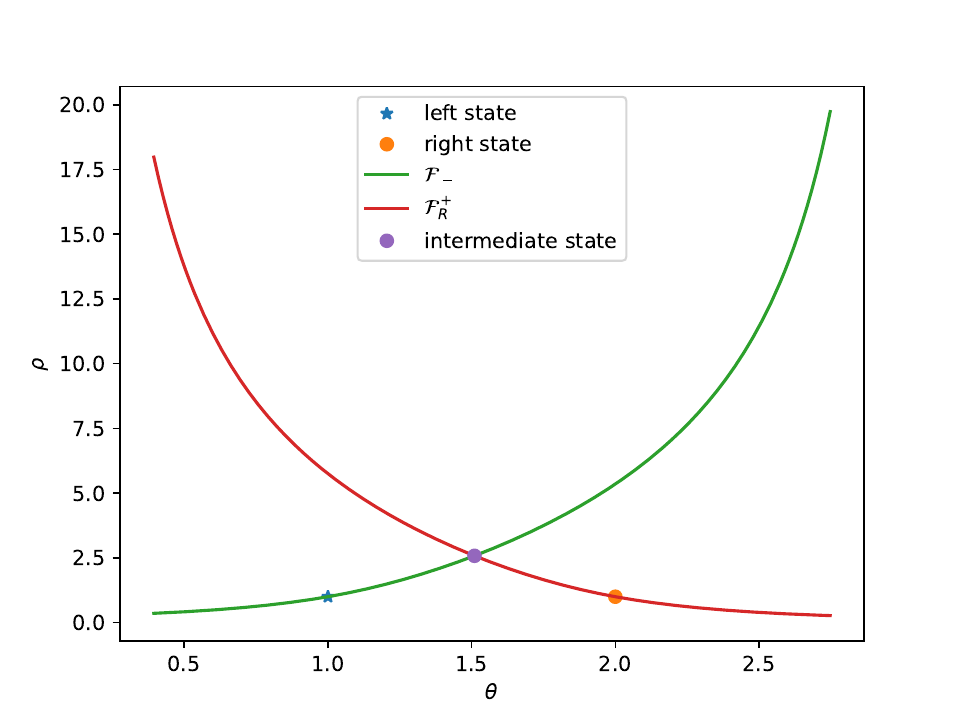}
	\end{subfigure}
	\begin{subfigure}[h]{0.5\textwidth}
		\centering
			\includegraphics[width=1.\linewidth]{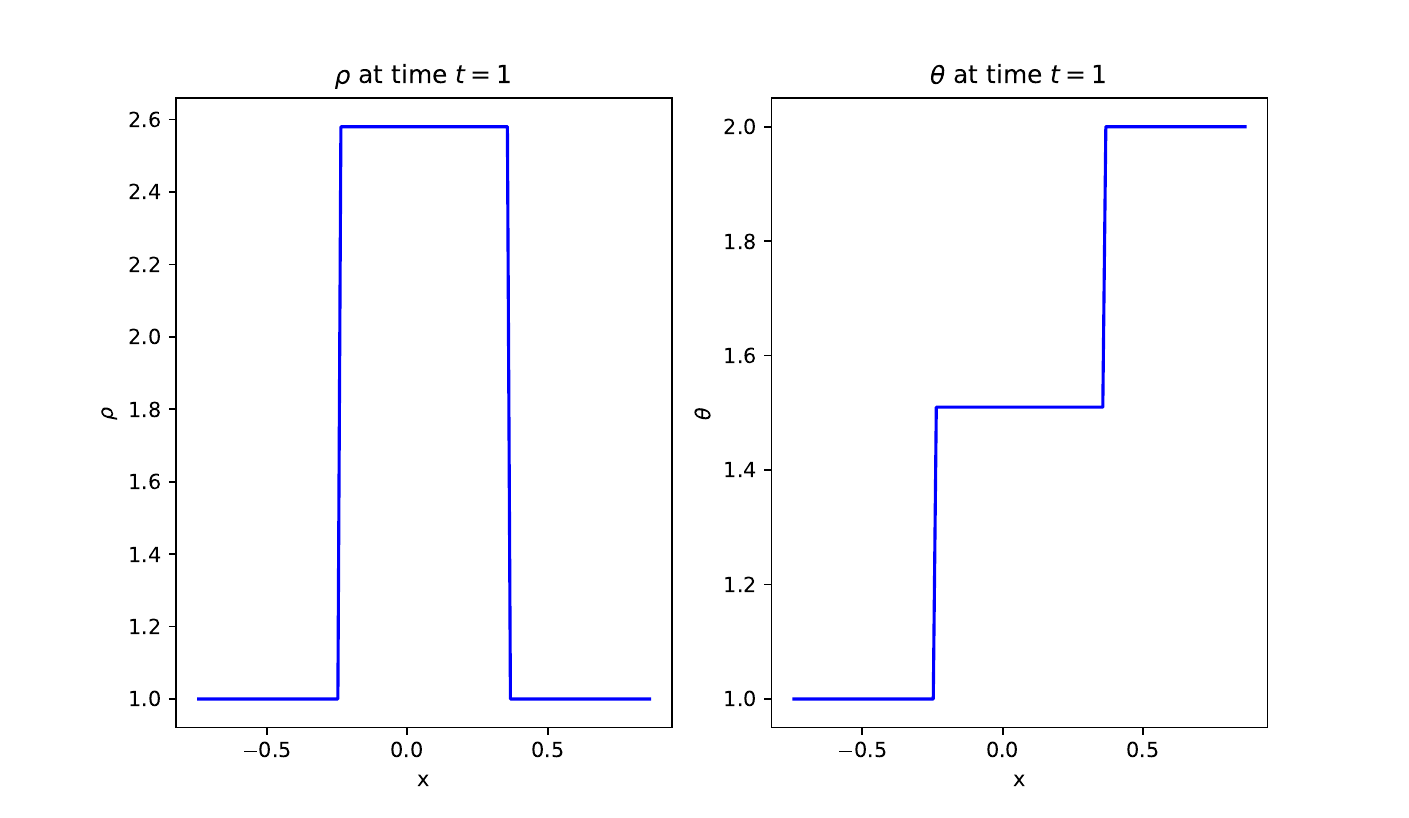}
	\end{subfigure}
	\caption{Riemann problem: two shock waves. Left: curves $\mathcal{C}_-(U_L)$ and $\mathcal{C}^b_+(U_R)$ for $U_L = [1,1]^T$, $U_R = [1, 2]^T$. Right: $\rho$ (left) and $\theta$ (right) at time $t = 1$ and parameters $c$ = 0.8, $\lambda$ = 0.2.}
	\label{fig:PR_2shocks}
\end{figure}

\end{itemize}

\item If $\theta_R < \theta_m < \theta_L$ the solution of the Riemann problem consists of two rarefaction waves. 
\begin{itemize}
\item \textbf{Example:} $U_L =[1,2]^T$, $U_R = [0.5,1]^T$. In this case, the intermediate state is 
$U_m = [0.21862670530690873, 1.335917425836824]^T$ and the solution is given by 
$$
U(x,t) = \begin{cases} 
U_L & \text{if $x \leq \mu_-(\theta_L) t$;}\\
 \left[\begin{array}{c} \rho_L e^{(F_-(\chi_+(\frac{x}{t}))-F_-(\theta_L))} \\ \chi_+(\frac{x}{t}) \end{array} \right] & \text{if $\mu_-(\theta_L) t \leq x \leq \mu_-(\theta_R)t$;}\\
U_m & \text{if $\mu_-(\theta_R)t \leq x \leq \mu_+(\theta_L)t$;}\\
\left[\begin{array}{c} \rho_m e^{(F_+(\chi_-(\frac{x}{t}))-F_+(\theta_m))} \\ \chi_-(\frac{x}{t}) \end{array} \right] & \text{if $\mu_+(\theta_L)t \leq x \leq \mu_+(\theta_R) \sigma_+t$;}\\
U_R & \text{if $\mu_+(\theta_R)t \leq x $.} 
\end{cases}
$$
In Figure \ref{fig:PR_2raref} we display the curves $\mathcal{C}_-(U_L)$ and $\mathcal{C}^b_+(U_R)$ and the graph of the solution at time $t = 1$.
\begin{figure}[htpb]
	\begin{subfigure}[h]{0.45\textwidth}
		\centering
			\includegraphics[width=\linewidth]{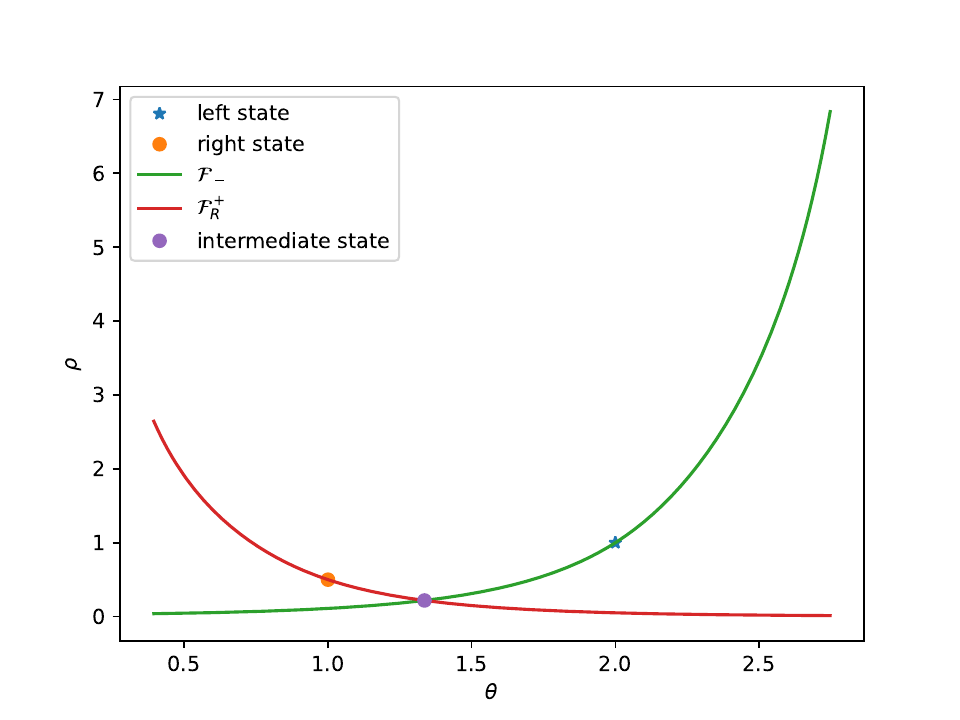}
	\end{subfigure}
	\begin{subfigure}[h]{0.5\textwidth}
		\centering
			\includegraphics[width=1.\linewidth]{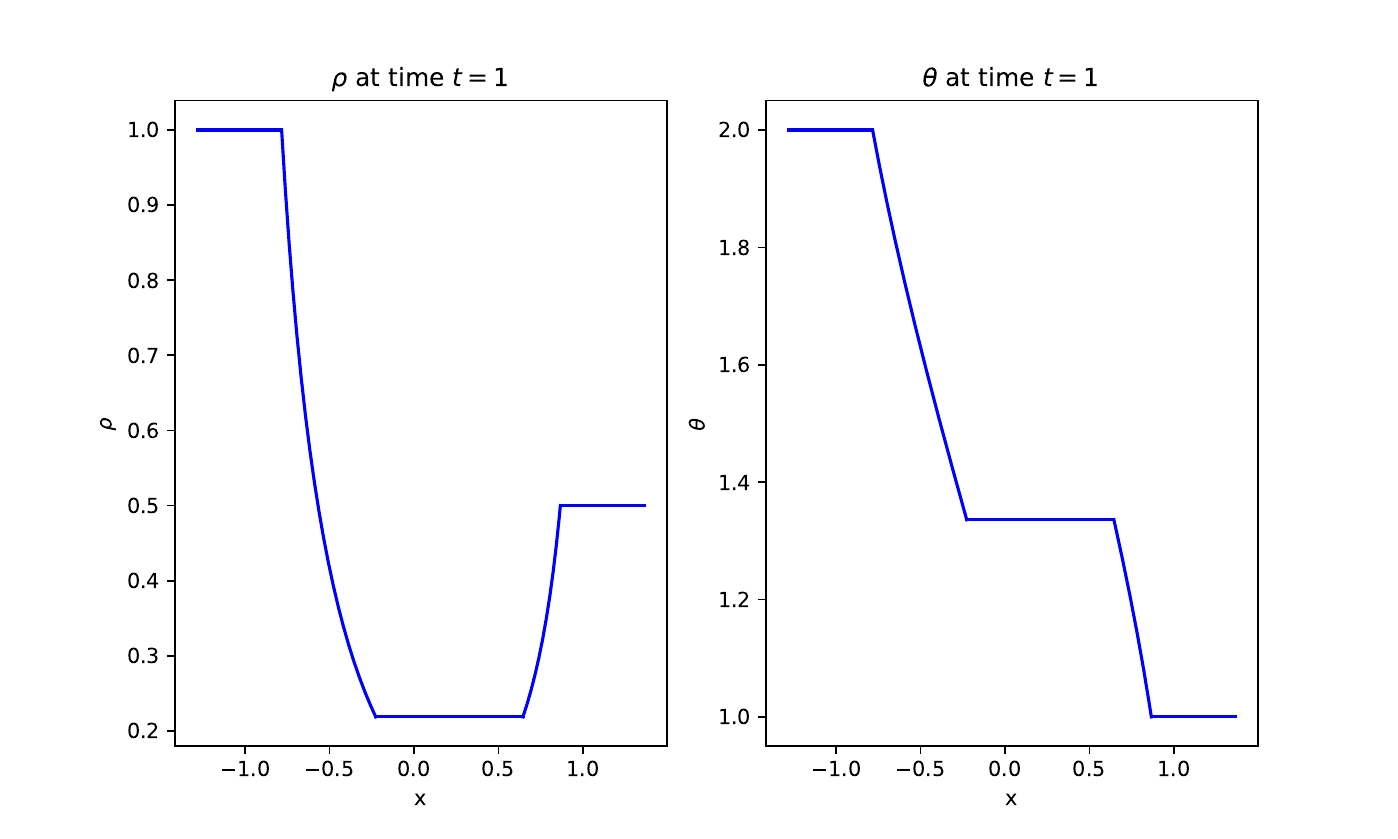}
	\end{subfigure}
	\caption{Riemann problem: two rarefaction waves. Left: curves $\mathcal{C}_-(U_L)$ and $\mathcal{C}^b_+(U_R)$ for $U_L = [1,2]^T$, $U_R = [0.5,1]^T$. Right: $\rho$ (left) and $\theta$ (right) at time $t = 1$ and parameters $c$ = 0.8, $\lambda$ = 0.2.}
	\label{fig:PR_2raref}
\end{figure}

\end{itemize}

\item If $\theta_m < \theta_L$ and $ \theta_m < \theta_R$ the solution of the Riemann problem consists of a 1-rarefaction and a 2-shock wave. 
\begin{itemize}
\item \textbf{Example:} $U_L =[5,2]^T$, $U_R = [0.5,1.5]^T$. In this case, the intermediate state is 
$U_m = [0.8974789258204603 , 1.2418210093791815]^T$ and the solution is given by 
$$
U(x,t) = \begin{cases} 
U_L & \text{if $x \leq \mu_-(\theta_L) t$;}\\
 \left[\begin{array}{c} \rho_Le^{(F_-(\chi_+(\frac{x}{t}))-F_-(\theta_L))} \\ \chi_+(\frac{x}{t}) \end{array} \right] & \text{if $\mu_-(\theta_L) t \leq x \leq \mu_-(\theta_R)t$;}\\
U_m & \text{if $\mu_-(\theta_R)t \leq x \leq \sigma_+ t$;}\\
U_R & \text{if $\sigma_+t \leq x $,} 
\end{cases}
$$
with $\sigma_+ = 0.6404943992157477$.
The curves $\mathcal{C}_-(U_L)$ and $\mathcal{C}^b_+(U_R)$ and the graph of the solution at time $t = 1$ are displayed in Figure \ref{fig:PR_raref+shock}.
\begin{figure}[htpb]
	\begin{subfigure}[h]{0.45\textwidth}
		\centering
			\includegraphics[width=\linewidth]{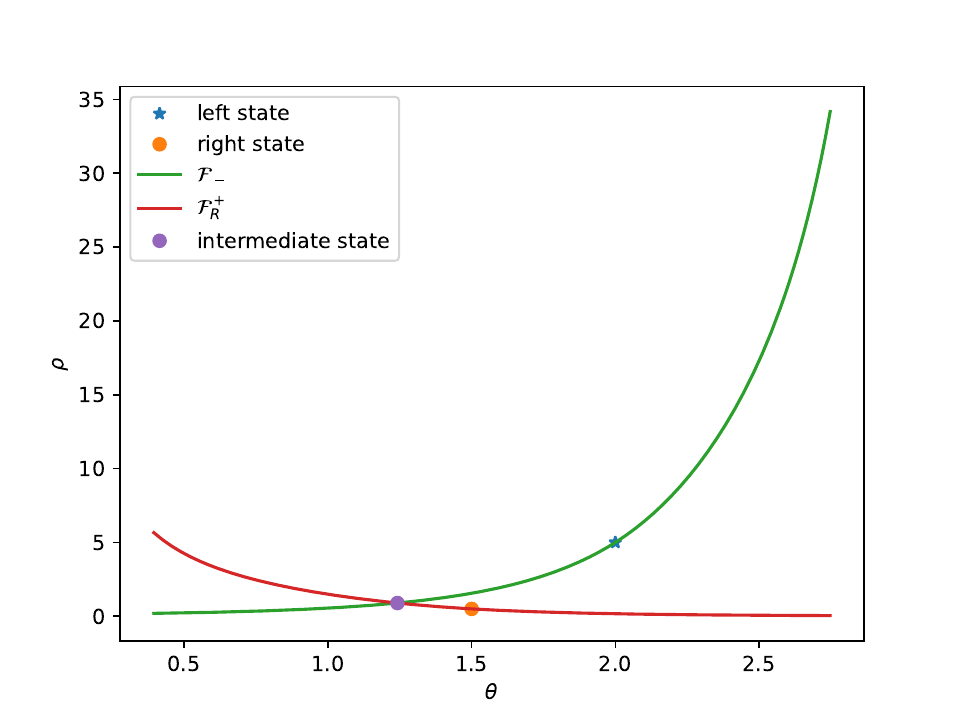}
	\end{subfigure}
	\begin{subfigure}[h]{0.5\textwidth}
		\centering
			\includegraphics[width=1.\linewidth]{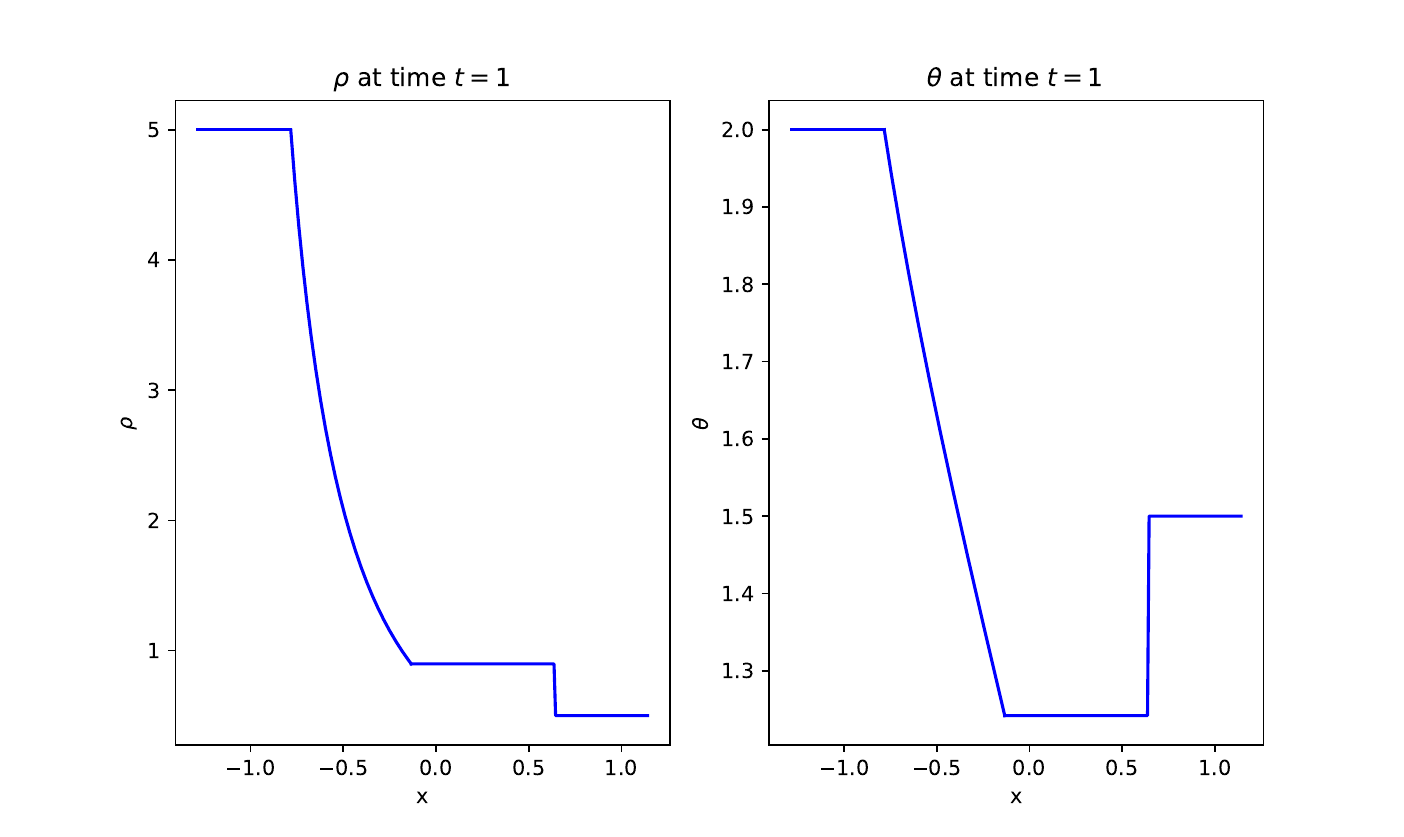}
	\end{subfigure}
	\caption{Riemann problem: rarefaction and shock waves. Left: curves $\mathcal{C}_-(U_L)$ and $\mathcal{C}^b_+(U_R)$ for $U_L = [5,2]^T$, $U_R = [0.5, 1.5]^T$. Right: $\rho$ (left) and $\theta$ (right) at time $t = 1$ and parameters $c$ = 0.8, $\lambda$ = 0.2.}
	\label{fig:PR_raref+shock}
\end{figure}

\end{itemize}

\item If $\theta_m > \theta_L$ and $ \theta_m > \theta_R$ the solution of the Riemann problem consists of a 1-shock and a 2-rarefaction waves. 
\begin{itemize}
\item \textbf{Example:} $U_L = [0.5,1.5]^T$ and $U_R = [5,2]^T$. In this case, the intermediate state is 
$U_m = [2.64567223988045, 2.3314594288198824]^T$ and the solution is given by 
%$$
%U(x,t) = \begin{cases} 
%U_L & \text{if $x \leq \sigma_- t$;}\\
%U_m & \text{if $\sigma_-t \leq x \leq \mu_+(\theta_L)t$;}\\
%\left[\begin{array}{c} \rho_m\exp(F_+(\chi_-(\frac{x}{t}))-F_+(\theta_m)) \\ \chi_-(\frac{x}{t}) \end{array} \right] & \text{if $\mu_+(\theta_L)t \leq x \leq \mu_+(\theta_R) \sigma_+t$;}\\
%U_R & \text{if $\mu_+(\theta_R)t \leq x $.} 
%\end{cases}
%$$
\[
U(x,t) =
\begin{cases}
  U_L & \text{if } x \leq \sigma_- t;\\
  U_m & \text{if } \sigma_- t \leq x \leq \mu_+(\theta_L)\,t;\\[4pt]
  \begin{bmatrix}
    \rho_m\, e^{(F_+(\chi_-(x/t)) - F_+(\theta_m))} \\
    \chi_-(x/t)
  \end{bmatrix}
  & \text{if } \mu_+(\theta_L)\,t \leq x \leq \mu_+(\theta_R)\,\sigma_+ t;\\[4pt]
  U_R & \text{if } x \geq \mu_+(\theta_R)\,t.
\end{cases}
\]
with $\sigma_- = -0.8665350302618553$.
The curves $\mathcal{C}_-(U_L)$ and $\mathcal{C}^b_+(U_R)$ and the graph of the solution at time $t = 1$ are displayed in Figure \ref{fig:PR_shock+raref}.
\begin{figure}[htpb]
	\begin{subfigure}[h]{0.45\textwidth}
		\centering
			\includegraphics[width=\linewidth]{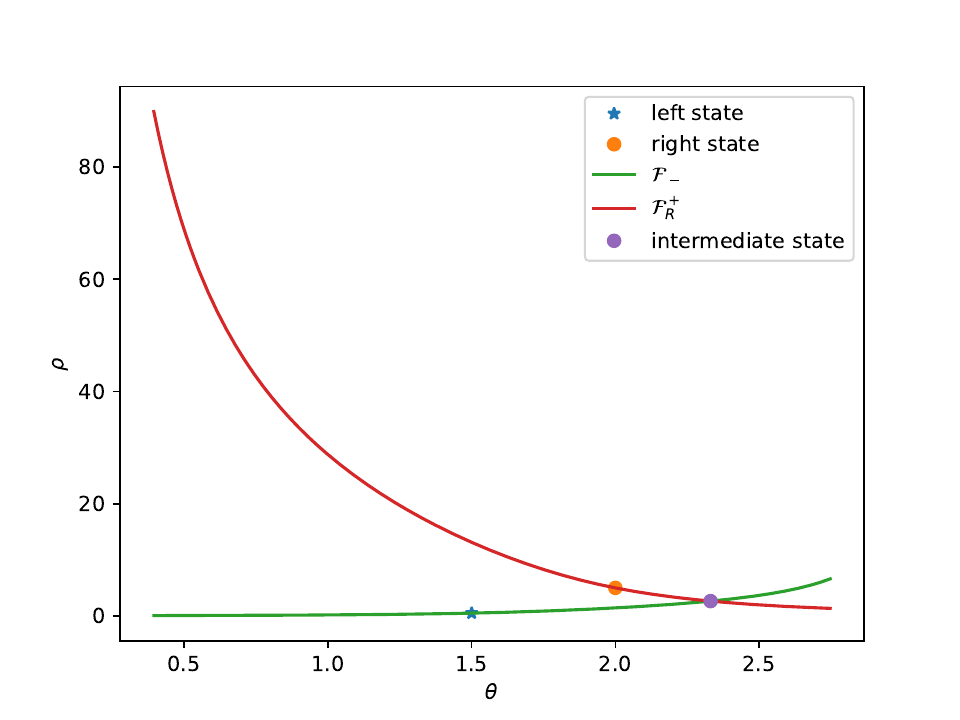}
	\end{subfigure}
	\begin{subfigure}[h]{0.5\textwidth}
		\centering
			\includegraphics[width=1.\linewidth]{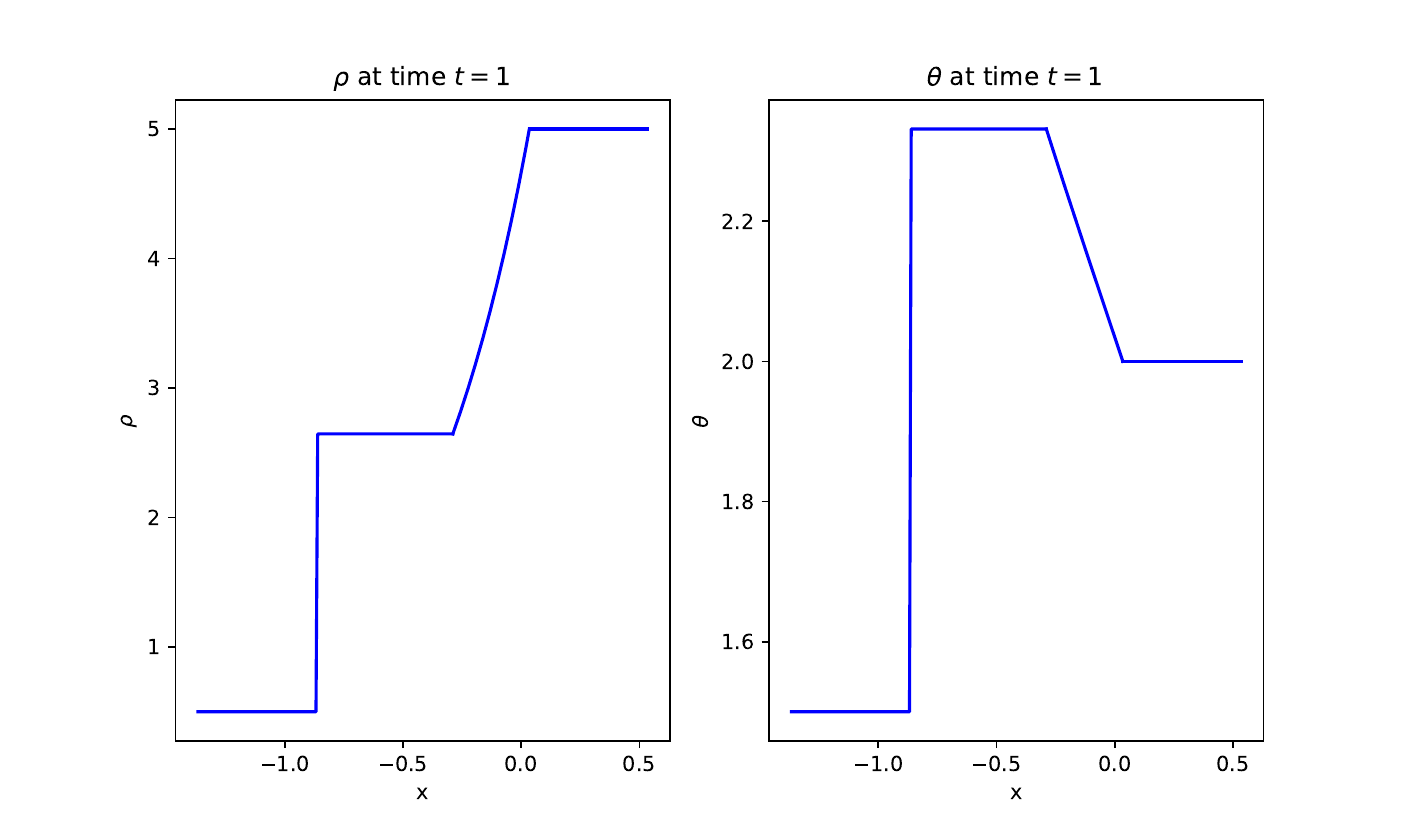}
	\end{subfigure}
	\caption{Riemann problem: rarefaction and shock waves. Left: curves $\mathcal{C}_-(U_L)$ and $\mathcal{C}^b_+(U_R)$ for $U_L = [0.5, 1.5]^T$ and $U_R = [5,2]^T$. Right: $\rho$ (left) and $\theta$ (right) at time $t = 1$ and parameters $c$ = 0.8, $\lambda$ = 0.2.}
	\label{fig:PR_shock+raref}
\end{figure}

\end{itemize}

\end{enumerate}

\section{Conclusions and future perspectives}

In this work, we have studied the solutions of the Riemann problem for the Self-Organized Hydrodynamics model developed in \cite{Degond2008}, specifically in the formulation presented in \cite{motsch2011numerical, compain:hal-05490904}. By incorporating a physically motivated viscosity \cite{Degond2012}, exact jump conditions across shock waves can be established, as shown in \cite{compain:hal-05490904}. We have proven that these conditions cannot be obtained by the prescription of a family of paths using the theory of Dal Maso, LeFloch and Murat \cite{DMLFM1995} due to a non standard viscosity term. These non standard generalized Rankine-Hugoniot conditions, together with the Riemann invariants, enable the full characterization of both shock and rarefaction wave curves. Based on these wave curves, we construct the complete solution to the Riemann problem, focusing on regions where the characteristic fields are genuinely nonlinear. Some examples are shown for the different configurations. Exact solutions to the Riemann problem are available to compute at: \href{https://mariecmpn.github.io/website/riemann_solver_soh.html }{https://mariecmpn.github.io/website/riemann\_solver\_soh.html}
As a future work, the proposed Riemann solver will be used to design a numerical scheme with in-cell discontinuous reconstructions \cite{chalons2020path, pimentelgarcia2022cell, pimentel2024drmood} capable of converging to the physical solution in the presence of shocks.

%%%%%%%%%%%%%%%%%%%%%%%%%%%%%%%%%%%%%%%%%%%%%%%%%%%%%%%%%%%
%%%%%%%%%%%%%%%%%%%%%%%%%%%%%%%%%%%%%%%%%%%%%%%%%%%%%%%%%%%
%%%%%%%%%%%%%%%%%%%%%%%%%%%%%%%%%%%%%%%%%%%%%%%%%%%%%%%%%%%

\appendix

\section{Integrals}

\subsection{Primitive function for the integral curve}\label{appendix_1}
The purpose of this appendix is to derive an explicit expression for the primitive function needed in the computation of the integral curves \eqref{charcurv} for rarefaction wave solutions.
$$
\int \frac{\sin s }{\cos s - \lambda_\pm(s)}\, ds = F_\pm(\theta) + constant,
$$
with        %\begin{equation}
        %F_\pm(x) = -\frac{a}{\lambda} \log \left| \sqrt{a^2\cos^2(x) + \lambda\sin^2(x)} \mp a\cos(x) \right| \pm \frac{\sqrt{\lambda - a^2}}{\lambda} \arcsin\left( \cos(x) \sqrt{\frac{\lambda - a^2}{\lambda}} \right) 
        %\end{equation}
\begin{multline}
  F_\pm(x) = -\frac{a}{\lambda}
    \log \left| \sqrt{a^2\cos^2(x) + \lambda\sin^2(x)} \mp a\cos(x) \right| \\
    \pm \frac{\sqrt{\lambda - a^2}}{\lambda}
    \arcsin\left( \cos(x) \sqrt{\frac{\lambda - a^2}{\lambda}} \right),
\end{multline}
        and
        $$
a^2 = \left(\frac{1-c}{2}\right)^2.
$$
Functions $F_\pm$ were computed using AI.

\subsection{Primitive function for the jump conditions}\label{appendix_2}
Now, in this appendix, we derive an explicit formulation for the primitive function needed in the evaluation of the Rankine-Hugoniot relations \eqref{G_sigma} for shock wave solutions.
\begin{eqnarray*}
G_\sigma(\theta) & = & \int \frac{- \sigma + c \cos(\theta) - \frac{\lambda \sin(\theta)^2}{\cos(\theta) - \sigma}}{(\cos(\theta) - \sigma)^2}\, d\theta\\
& = & - \frac{\lambda \sin(\theta)}{2(\cos(\theta) - \sigma)^2} + 
\frac{\sigma (2c + \lambda - 2) \sin(\theta)}{2 (1 - \sigma^2) (\cos(\theta) - \sigma)}
+  \frac{2c + \lambda - 2\sigma^2}{2 (1 - \sigma^2)} J(\theta, \sigma)
\end{eqnarray*}
with
$$
J(\theta, \sigma) = \begin{cases}
\frac{2}{\sqrt{1 - \sigma^2}}\mathrm{arctanh}\left( \sqrt{\frac{1 + \sigma}{1 - \sigma}}t \right)
  & \text{if $|\sigma| < 1$;}\\
  -\frac{2}{\sqrt{\sigma^2 -1}}\arctan\left( \sqrt{\frac{\sigma + 1}{\sigma-1}}t\right)
  & \text{if $\sigma > 1$;}\\
    \frac{2}{\sqrt{\sigma^2 -1}}\arctan\left( \sqrt{\frac{-\sigma - 1}{1 - \sigma}}t\right)
  & \text{if $\sigma < -1$;}
  \end{cases}
  $$
  with 
  $$
  t = \tan\left( \frac{\theta}{2}\right).$$
Function $G_{\sigma}$ was also computed using AI.
  
\section*{Acknowledgments}
Ernesto Pimentel-García and Carlos Parés have been supported by projects PID2022-137637NB-C21 and PCI2024-155061-2 funded by \\MCIN/AEI/10.13039/501100011033/ and ERDF A way of making Europe. Both have also been supported by the Grant PPRO-FQM216-2023 funded by Consejería de Universidad, Investigación e Innovación and by ERDF Andalusia Program 2021-2027. 
Christophe Berthon and Marie Compain acknowledge the support of the ANR project COOKIE ANR-25-CE40-5565.
Marie Compain has been supported by the CNRS IRP PICASSO.
The authors thank Manuel J. Castro for the helpful discussions about this work.

\bibliographystyle{plain}
\bibliography{references}

\end{document}